\tikzset{mynode/.style={ellipse, draw, inner xsep=0pt}}
\newlist{observations}{enumerate}{1}
\setlist[observations]{label=Cond. \Alph*., leftmargin=1in}
\Crefname{observations}{Cond. }{Obvservations}
\theoremstyle{definition}
\newtheorem{theorem}{Theorem}[section]
\newtheorem{proposition}[theorem]{Proposition}
\newtheorem{lemma}[theorem]{Lemma}
\newtheorem{chunk}[theorem]{}
\newtheorem{notation}[theorem]{Notation}
\newtheorem{setup}[theorem]{Setup}
\theoremstyle{remark}
\newtheorem{remark}[theorem]{Remark}
\newcommand{\EE}{\mathsf{E}}
\newcommand{\FF}{\mathsf{F}}
\newcommand{\GG}{\mathsf{G}}
\DeclareMathOperator{\Tor}{Tor}
\DeclareMathOperator{\Hom}{Hom}
\DeclareMathOperator{\im}{im}
\DeclareMathOperator{\rank}{rank}
\numberwithin{equation}{section}
\newcommand\newref[1]{#1\def\@currentlabel{#1}}
\begin{document}

\title[Realizing Algebra Structures on Free Resolutions of Grade 3 Perfect Ideals]{Realizing Algebra Structures on Free Resolutions of Grade 3 Perfect Ideals}

\author[A.~Hardesty]{Alexis Hardesty}
\address{Division of Mathematics,
Texas Woman's University, Denton, TX 76204, U.S.A.}
\email{ahardesty1@twu.edu}

\keywords{Realizability question, free resolution, Tor algebra, DG algebra}
\subjclass[2020]{13C05, 13C40, 13D02}

\begin{abstract}
Perfect ideals $I$ of grade $3$ in a local ring $(R,\mathfrak{m},\Bbbk)$ can be classified based on multiplicative structures on $\Tor^R_\bullet(R/I,\Bbbk)$. The classification is incomplete in the sense that it remains open which of the possible algebra structures actually occur; this \emph{realizability question} was formally posed by Avramov in 2012. Of five classes of algebra structures, the realizability question has been answered for one class. In this work, we answer the realizability question for two more classes and contribute towards an answer for a third.
\end{abstract}

\maketitle

\section{Introduction}
Let $(R,\mathfrak{m},\Bbbk)$ be a regular local ring and let $I\subseteq R$ be a perfect ideal of grade $3$. By a result of Buchsbaum and Eisenbud \cite[Proposition 1.1]{BE77}, the minimal free resolution $F_\bullet$ of $R/I$ over $R$ has a differential graded (DG) algebra structure. This induces a graded algebra structure on $\Tor^R_\bullet(R/I,\Bbbk)=\mathrm{H}_\bullet(F_\bullet\otimes_R\Bbbk)$.  Results of Weyman \cite[Theorem 4.1]{Weyman89} and of Avramov, Kustin, and Miller \cite[Theorem 2.1]{AKM88} show that this structure supports a classification scheme for grade $3$ perfect ideals in $R$. In particular, consider the minimal free resolution $F_{\bullet}$ of $R/I$:
\begin{equation*}
\begin{tikzcd} 
0 \ar[r] & F_{3} \ar[r,"d_3"] & F_{2} \ar[r,"d_2"] & F_{1} \ar[r,"d_1"] & R \ar[r] & 0.
\end{tikzcd}
\end{equation*}
Set $m = \rank_R(F_1)$ and $n = \rank_R(F_3)$. We say that $I$ has \emph{format} $(m,n)$. Denote $A_\bullet = \Tor_\bullet^R(R/I,\Bbbk)$ and set the following notation:
\[
p=\rank_\Bbbk A_1A_1, \quad q=\rank_\Bbbk A_1A_2, \quad r=\rank\delta\]
where $\delta: A_2\rightarrow \Hom_\Bbbk(A_1,A_3)$ is defined by $\delta(x)(y)=xy$ for $x\in A_2$ and $y\in A_1$. 

The classes in the scheme are named $\mathbf{B}$, $\mathbf{C}(3)$, $\mathbf{G}(r)$, $\mathbf{H}(p,q)$, and $\mathbf{T}$. As you can see by the naming convention, class $\mathbf{G}$ and class $\mathbf{H}$ are  parameterized families of classes depending on parameter $r$ and parameters $p$ and $q$, respectively. Other than these parameters, the values of $p$, $q$, and $r$ are fixed for each class. 

In \cite{Avramov12}, Avramov revisited this classification, keeping track of the values mentioned above, along with the \emph{Cohen-Macaulay defect}, a measure of how close quotient ring $R/I$ is to being Cohen-Macaulay. In this paper, we only consider the case when $R/I$ is Cohen-Macaulay, guaranteed by the assumptions that $I$ is perfect and $R$ is a regular ring. We address a reformulated version of the \emph{realizability question} first posed by Avramov in \cite[Question 3.8]{Avramov12}: which five-tuples $(m,n,p,q,r)$ are realized by some Cohen-Macaulay ring $R/I$?

In this paper, we consider the realizability question by fixing a format $(m,n)$ and then asking which classes (with fixed values $p$, $q$, and $r$) are actually realizable for this format. Authors have contributed towards the realizability question in two ways: proving a class is not realizable for particular formats or realizing a class for particular formats. If there is a result stating that a class is not realizable for a particular format, we say that this class is \emph{not permissible}; otherwise, we say the class is \emph{permissible.} Works by Brown \cite{Brown84}, Avramov \cite{Avramov81,Avramov12}, and Christensen, Veliche, and Weyman \cite{CVW17Licci,CVW19Trim,CVW20Linkage}, restrict the permissible classes for specific formats. \Cref{tab:1} illustrates the permissible classes $\mathbf{H}(p,q)$ for format $(m,n)=(8,6)$. Values of $p$ and $q$ for which the class $\mathbf{H}(p,q)$ is permissible are represented by white boxes and values of $p$ and $q$ for which the class $\mathbf{H}(p,q)$ is not permissible are represented by dotted boxes.

\begin{table}
\centering
\begin{tikzpicture}[x=1.1cm,y=.5cm]
\draw[step=1.0,gray,very thin](0,0) grid (9,8); 
\fill[pattern=crosshatch dots] (1,4) rectangle (2,5); 
\fill[pattern=crosshatch dots] (1,5) rectangle (2,6); 
\fill[pattern=crosshatch dots] (1,6) rectangle (2,7); 
\draw[black,thick](1,4) -- (2,4);
\draw[black,thick](2,4) -- (2,5);

\fill[pattern=crosshatch dots] (2,5) rectangle (3,6); 
\fill[pattern=crosshatch dots] (2,6) rectangle (3,7); 
\draw[black,thick](2,5) -- (3,5);
\draw[black,thick](3,4) -- (3,5);

\fill[pattern=crosshatch dots] (3,4) rectangle (4,5); 
\fill[pattern=crosshatch dots] (3,5) rectangle (4,6); 
\fill[pattern=crosshatch dots] (3,6) rectangle (4,7); 
\draw[black,thick](3,4) -- (4,4);
\draw[black,thick](4,4) -- (4,5);

\fill[pattern=crosshatch dots] (4,5) rectangle (5,6); 
\fill[pattern=crosshatch dots] (4,6) rectangle (5,7); 
\draw[black,thick](4,5) -- (5,5);
\draw[black,thick](5,4) -- (5,5);
      
\fill[pattern=crosshatch dots] (5,4) rectangle (6,5); 
\fill[pattern=crosshatch dots] (5,5) rectangle (6,6); 
\fill[pattern=crosshatch dots] (5,6) rectangle (6,7);
\draw[black,thick](5,4) -- (6,4);
\draw[black,thick](6,4) -- (6,5);

\fill[pattern=crosshatch dots] (6,1) rectangle (7,2); 
\fill[pattern=crosshatch dots] (6,3) rectangle (7,4); 
\fill[pattern=crosshatch dots] (6,5) rectangle (7,6); 
\fill[pattern=crosshatch dots] (6,6) rectangle (7,7);
\draw[black,thick](6,5) -- (7,5);
\draw[black,thick](6,4) -- (7,4);
\draw[black,thick](6,3) -- (7,3);
\draw[black,thick](6,2) -- (7,2);
\draw[black,thick](6,1) -- (7,1);
\draw[black,thick](7,4) -- (7,5); 
\draw[black,thick](7,2) -- (7,3);
\draw[black,thick](7,0) -- (7,1);
\draw[black,thick](6,3) -- (6,4);
\draw[black,thick](6,1) -- (6,2);

\fill[pattern=crosshatch dots] (7,0) rectangle (8,1); 
\fill[pattern=crosshatch dots] (7,1) rectangle (8,2); 
\fill[pattern=crosshatch dots] (7,2) rectangle (8,3); 
\fill[pattern=crosshatch dots] (7,3) rectangle (8,4); 
\fill[pattern=crosshatch dots] (7,4) rectangle (8,5); 
\fill[pattern=crosshatch dots] (7,5) rectangle (8,6); 
\fill[pattern=crosshatch dots] (7,6) rectangle (8,7); 
\draw[black,thick](8,6) -- (8,7);
\draw[black,thick](8,6) -- (9,6);

\fill[pattern=crosshatch dots] (8,0) rectangle (9,1); 
\fill[pattern=crosshatch dots] (8,1) rectangle (9,2); 
\fill[pattern=crosshatch dots] (8,2) rectangle (9,3); 
\fill[pattern=crosshatch dots] (8,3) rectangle (9,4); 
\fill[pattern=crosshatch dots] (8,4) rectangle (9,5); 
\fill[pattern=crosshatch dots] (8,5) rectangle (9,6); 

\draw[black,thick](0,0) -- (9,0);
\draw[black,thick](0,0) -- (0,8);
\draw[black,thick](0,8) -- (9,8);
\draw[black,thick](9,0) -- (9,8);
\draw[black,thick](1,0) -- (1,8);
\draw[black,thick](0,7) -- (9,7);


\draw (0.5,0.5) node {$q=0$}; 
\draw (0.5,1.5) node {$q=1$}; 
\draw (0.5,2.5) node {$q=2$};
\draw (0.5,3.5) node {$q=3$}; 
\draw (0.5,4.5) node {$q=4$}; 
\draw (0.5,5.5) node {$q=5$}; 
\draw (0.5,6.5) node {$q=6$};

\draw (1.5,7.5) node {$p=0$}; 
\draw (2.5,7.5) node {$p=1$}; 
\draw (3.5,7.5) node {$p=2$};
\draw (4.5,7.5) node {$p=3$}; 
\draw (5.5,7.5) node {$p=4$}; 
\draw (6.5,7.5) node {$p=5$}; 
\draw (7.5,7.5) node {$p=6$};
\draw (8.5,7.5) node {$p=7$};
\end{tikzpicture}
\caption{Permissible $\mathbf{H}(p,q)$ classes for format $(8,6)$.} \label{tab:1}
\end{table}

In this paper, we focus on realizing permissible classes using linkage of ideals. Linkage was used by Avramov, Kustin, and Miller in \cite{AKM88} to establish the classification scheme and is a main tool in several of the papers addressing realizability. In this work, we use linkage to prove existence of ideals of class $\mathbf{T}$, class $\mathbf{B}$, and class $\mathbf{H}$. More precisely, we show that for all formats $(m,n)$ with a permissible $\mathbf{T}$ class, a permissible $\mathbf{B}$ class, a permissible $\mathbf{H}(n-1,q)$ class, or a permissible $\mathbf{H}(p,m-4)$ class, there exists an ideal of this class. We will refer to the latter two classes as \emph{boundary classes}. 

\Cref{tab:2} illustrates the permissible boundary classes for format $(8,6)$. Values of $p$ and $q$ for which the class $\mathbf{H}(p,q)$ is a permissible boundary class are represented by black boxes, values of $p$ and $q$ for which the class $\mathbf{H}(p,q)$ is permissible but \emph{not} a boundary class are represented by white boxes, and values of $p$ and $q$ for which the class $\mathbf{H}(p,q)$ is not permissible are represented by dotted boxes. In particular, the classes $\mathbf{H}(5,0)$, $\mathbf{H}(5,2)$, and $\mathbf{H}(5,4)$ are permissible boundary classes of the form $\mathbf{H}(n-1,q)$. Moreover, the classes $\mathbf{H}(1,4)$, $\mathbf{H}(3,4)$, and $\mathbf{H}(5,4)$ permissible boundary classes of the form $\mathbf{H}(p,m-4)$.

\begin{table}
\centering
\begin{tikzpicture}[x=1.1cm,y=.5cm]
\draw[step=1.0,gray,very thin](0,0) grid (9,8); 

\fill[black] (2,4) rectangle (3,5); 
\fill[black] (4,4) rectangle (5,5);
\fill[black] (6,4) rectangle (7,5);
\fill[black] (6,2) rectangle (7,3);
\fill[black] (6,0) rectangle (7,1);

\fill[pattern=crosshatch dots] (1,4) rectangle (2,5); 
\fill[pattern=crosshatch dots] (1,5) rectangle (2,6); 
\fill[pattern=crosshatch dots] (1,6) rectangle (2,7); 
\draw[black,thick](1,4) -- (2,4);
\draw[black,thick](2,4) -- (2,5);

\fill[pattern=crosshatch dots] (2,5) rectangle (3,6); 
\fill[pattern=crosshatch dots] (2,6) rectangle (3,7); 
\draw[black,thick](2,5) -- (3,5);
\draw[black,thick](3,4) -- (3,5);

\fill[pattern=crosshatch dots] (3,4) rectangle (4,5); 
\fill[pattern=crosshatch dots] (3,5) rectangle (4,6); 
\fill[pattern=crosshatch dots] (3,6) rectangle (4,7); 
\draw[black,thick](3,4) -- (4,4);
\draw[black,thick](4,4) -- (4,5);

\fill[pattern=crosshatch dots] (4,5) rectangle (5,6); 
\fill[pattern=crosshatch dots] (4,6) rectangle (5,7); 
\draw[black,thick](4,5) -- (5,5);
\draw[black,thick](5,4) -- (5,5);
      
\fill[pattern=crosshatch dots] (5,4) rectangle (6,5); 
\fill[pattern=crosshatch dots] (5,5) rectangle (6,6); 
\fill[pattern=crosshatch dots] (5,6) rectangle (6,7);
\draw[black,thick](5,4) -- (6,4);
\draw[black,thick](6,4) -- (6,5);

\fill[pattern=crosshatch dots] (6,1) rectangle (7,2); 
\fill[pattern=crosshatch dots] (6,3) rectangle (7,4); 
\fill[pattern=crosshatch dots] (6,5) rectangle (7,6); 
\fill[pattern=crosshatch dots] (6,6) rectangle (7,7);
\draw[black,thick](6,5) -- (7,5);
\draw[black,thick](6,4) -- (7,4);
\draw[black,thick](6,3) -- (7,3);
\draw[black,thick](6,2) -- (7,2);
\draw[black,thick](6,1) -- (7,1);
\draw[black,thick](7,4) -- (7,5); 
\draw[black,thick](7,2) -- (7,3);
\draw[black,thick](7,0) -- (7,1);
\draw[black,thick](6,3) -- (6,4);
\draw[black,thick](6,1) -- (6,2);

\fill[pattern=crosshatch dots] (7,0) rectangle (8,1); 
\fill[pattern=crosshatch dots] (7,1) rectangle (8,2); 
\fill[pattern=crosshatch dots] (7,2) rectangle (8,3); 
\fill[pattern=crosshatch dots] (7,3) rectangle (8,4); 
\fill[pattern=crosshatch dots] (7,4) rectangle (8,5); 
\fill[pattern=crosshatch dots] (7,5) rectangle (8,6); 
\fill[pattern=crosshatch dots] (7,6) rectangle (8,7); 
\draw[black,thick](8,6) -- (8,7);
\draw[black,thick](8,6) -- (9,6);

\fill[pattern=crosshatch dots] (8,0) rectangle (9,1); 
\fill[pattern=crosshatch dots] (8,1) rectangle (9,2); 
\fill[pattern=crosshatch dots] (8,2) rectangle (9,3); 
\fill[pattern=crosshatch dots] (8,3) rectangle (9,4); 
\fill[pattern=crosshatch dots] (8,4) rectangle (9,5); 
\fill[pattern=crosshatch dots] (8,5) rectangle (9,6); 

\draw[black,thick](0,0) -- (9,0);
\draw[black,thick](0,0) -- (0,8);
\draw[black,thick](0,8) -- (9,8);
\draw[black,thick](9,0) -- (9,8);
\draw[black,thick](1,0) -- (1,8);
\draw[black,thick](0,7) -- (9,7);


\draw (0.5,0.5) node {$q=0$}; 
\draw (0.5,1.5) node {$q=1$}; 
\draw (0.5,2.5) node {$q=2$};
\draw (0.5,3.5) node {$q=3$}; 
\draw (0.5,4.5) node {$q=4$}; 
\draw (0.5,5.5) node {$q=5$}; 
\draw (0.5,6.5) node {$q=6$};

\draw (1.5,7.5) node {$p=0$}; 
\draw (2.5,7.5) node {$p=1$}; 
\draw (3.5,7.5) node {$p=2$};
\draw (4.5,7.5) node {$p=3$}; 
\draw (5.5,7.5) node {$p=4$}; 
\draw (6.5,7.5) node {$p=5$}; 
\draw (7.5,7.5) node {$p=6$};
\draw (8.5,7.5) node {$p=7$};
\end{tikzpicture}
\caption{Permissible $\mathbf{H}(p,q)$ boundary classes for format $(8,6)$}  \label{tab:2}
\end{table}

The paper is organized as follows. In Section 2 we recall background information on perfect ideals of grade 3 and their classification. We then state linkage results that connect an ideal of a particular class to an ideal of another, typically different, class. The proofs of these results are technical in nature and are therefore recorded in the Appendix. In Section 3, 4, and 5, we apply the results to construct ideals that realize all permissible $\mathbf{T}$ classes, all permissible $\mathbf{B}$ classes, and all permissible $\mathbf{H}(p,q)$ boundary classes, respectively.

\section{Background and Main Results}

Throughout the paper, let $(R,\mathfrak{m},\Bbbk)$ be a regular local ring and $I\subseteq\mathfrak{m}^2$ a perfect ideal of grade 3. Consider a minimal free resolution $F_{\bullet}$ of $R/I$. We can write $F_{\bullet}$ as 
\begin{equation*}
\begin{tikzcd} 
0 \ar[r] & F_{3} \ar[r,"d_3"] & F_{2} \ar[r,"d_2"] & F_{1} \ar[r,"d_1"] & R \ar[r] & 0.
\end{tikzcd}
\end{equation*}
Set $m = \rank_R(F_1)$ and $n = \rank_R(F_3)$. Using the terminology in \cite{CVW17Licci}, one may call the tuple $(1,m,m+n-1,n)$ the \emph{format} of $I$. Since the Euler characteristic of the resolution is zero, the size of the resolution is completely determined by $m$ and $n$, so we instead refer to the tuple $(m,n)$ as the \emph{format} of $I$ and write $F_\bullet$ as
\begin{equation*}
\begin{tikzcd} 
0 \ar[r] & R^n \ar[r,"d_3"] & R^{m+n-1} \ar[r,"d_2"] & R^m \ar[r,"d_1"] & R \ar[r] & 0.
\end{tikzcd}
\end{equation*}
Now, we look at closer at the product on $A_\bullet = \Tor_\bullet^R(R/I,\Bbbk)$ as determined in \cite{AKM88}.

\begin{chunk}\label{classification}
There exist bases \[
\{\mathsf{e}_i\}_{i=1,\ldots,m},\quad\{\mathsf{f}_i\}_{i=1,\ldots,m+n-1},\quad\{\mathsf{g}_i\}_{i=1,\ldots,n}
\]
of $A_1$, $A_2$, and $A_3$, respectively, such that the multiplicative structure on $\mathbf{A_\bullet}$ is one of the following:
\begin{equation*}
\begin{tabular}{rlrl}
    $\mathbf{C}(3)$: & $\mathsf{e}_1 \mathsf{e}_2 = \mathsf{f}_3$, $\mathsf{e}_2 \mathsf{e}_3 = \mathsf{f}_1$, $\mathsf{e}_3 \mathsf{e}_1 = \mathsf{f}_2$ & $\mathsf{e}_i \mathsf{f}_i = \mathsf{g}_1$ & for $1\leq i\leq 3$\\
    $\mathbf{T}$: & $\mathsf{e}_1 \mathsf{e}_2 = \mathsf{f}_3$, $\mathsf{e}_2 \mathsf{e}_3 = \mathsf{f}_1$, $\mathsf{e}_3 \mathsf{e}_1 = \mathsf{f}_2$\\
    $\mathbf{B}$: & $\mathsf{e}_1 \mathsf{e}_2 = \mathsf{f}_3$ & $\mathsf{e}_i \mathsf{f}_i = \mathsf{g}_1$ & for $1\leq i \leq 2$\\
    $\mathbf{G}(r)$: &  & $\mathsf{e}_i \mathsf{f}_i = \mathsf{g}_1$ & for $1\leq i\leq r$\\
    $\mathbf{H}(p,q)$: & $\mathsf{e}_i \mathsf{e}_{p+1} = \mathsf{f}_i$ for $1\leq i\leq p$ & $\mathsf{e}_{p+1}\mathsf{f}_{p+i} = \mathsf{g}_i$ & for $1\leq i\leq q$
\end{tabular}
\end{equation*}
All graded-commutativity rules are understood, and all other products not listed are zero. From the above table, it is clear that the values of $p$, $q$ and $r$ for each class are:
\begin{equation*}
\begin{tabular}{rllll}
    $\mathbf{C}(3)$: & & $p=3$ & $q=1$ & $r=3$ \\
    $\mathbf{T}$: & & $p=3$ & $q=0$ & $r=0$\\
    $\mathbf{B}$: & & $p=1$ & $q=1$ & $r=2$\\
    $\mathbf{G}(r)$: & & $p=0$ & $q=1$ & \\
    $\mathbf{H}(p,q)$: & &   &   & $r=q$
\end{tabular}
\end{equation*}
\end{chunk}

The classification above was proved in \cite[Theorem 4.1]{Weyman89} and \cite[Theorem 2.1]{AKM88}. In this work, we refer to \cite{AKM88} when referencing the classification. It was shown in \cite{AKM88} that, even though the DG algebra structure of $F_\bullet$ is not unique, the induced algebra structure on $A_\bullet$ is unique.

Notice that the classes $\mathbf{G}(0)$ and $\mathbf{G}(1)$ coincide with $\mathbf{H}(0,0)$ and $\mathbf{H}(0,1)$, respectively. For this reason, when we discuss class $\mathbf{G}(r)$, we will assume that $r\geq 2$. 

Below we list the main tools of the paper. The proofs of these results are recorded in the appendix. 

\begin{theorem}\label{linktoT}
Let $I$ have format $(m,n)$. Then $I$ is linked to an ideal of class $\mathbf{T}$ and format $(n+3,m)$.
\end{theorem}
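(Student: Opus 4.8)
The plan is to realize the link along a regular sequence buried inside $\mathfrak m I$, present the minimal free resolution of the linked quotient as (the dual of) a mapping cone, and read its Tor algebra off the Koszul part of that cone. Since $R$ is regular and $\operatorname{grade} I = 3$ we have $\sqrt{\mathfrak m I} = \sqrt I$, so $\operatorname{grade}(\mathfrak m I) = 3$ and we may choose a regular sequence $\mathbf a = a_1,a_2,a_3 \in \mathfrak m I$; put $J = (\mathbf a) : I$. As $(\mathbf a) \subseteq I$ and $I \neq \mathfrak m I$ by Nakayama we have $(\mathbf a) \subsetneq J \subsetneq R$, and classical linkage over the Gorenstein ring $R$ shows $J$ is perfect of grade $3$ with $(\mathbf a) : J = I$; so $I$ and $J$ are linked.

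Let $K_\bullet$ be the Koszul complex on $\mathbf a$ and $\phi \colon K_\bullet \to F_\bullet$ a chain map lifting $R/(\mathbf a) \twoheadrightarrow R/I$ with $\phi_0 = \operatorname{id}$. After cancelling the split summand contributed by $\phi_0$, the mapping cone of $\phi$ is a free resolution
\[
D_\bullet \colon \quad 0 \to K_3 \to F_3 \oplus K_2 \to F_2 \oplus K_1 \to F_1 \to 0
\]
of $I/(\mathbf a)$, which by linkage and Gorensteinness of $R/(\mathbf a)$ is the canonical module of $R/J$. Hence $E_\bullet := \Hom_R(D_\bullet, R)$ resolves $\operatorname{Ext}^3_R(\operatorname{Ext}^3_R(R/J,R),R) \cong R/J$, with $E_1 = F_3^* \oplus K_2^*$ of rank $n+3$, $E_2 = F_2^* \oplus K_1^*$ of rank $m+n+2$, and $E_3 = F_1^*$ of rank $m$. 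To obtain the format $(n+3,m)$ I must show $E_\bullet$, equivalently $D_\bullet$, is minimal, i.e. that each $\phi_i$ ($i \geq 1$) has entries in $\mathfrak m$. But the map $\Tor^R_\bullet(R/(\mathbf a),\Bbbk) \to \Tor^R_\bullet(R/I,\Bbbk)$ induced by $R/(\mathbf a) \twoheadrightarrow R/I$ is represented by $\phi \otimes_R \Bbbk$ and is a homomorphism of graded $\Bbbk$-algebras (choose $\phi$ to lift the ring map to a morphism of DG algebras, possible since $K_\bullet$ is a Koszul complex); its source is the exterior algebra on $\Tor^R_1(R/(\mathbf a),\Bbbk) = (\mathbf a)/\mathfrak m(\mathbf a)$, and the degree-one component $(\mathbf a)/\mathfrak m(\mathbf a) \to I/\mathfrak m I$ vanishes precisely because $\mathbf a \subseteq \mathfrak m I$. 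So the map is zero in all positive degrees, $\phi_i \equiv 0 \pmod{\mathfrak m}$ for $i \geq 1$, and $D_\bullet$ is minimal; thus $J$ has format $(n+3,m)$.

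It remains to identify $A_\bullet = \Tor^R_\bullet(R/J,\Bbbk)$ with the Tor algebra of class $\mathbf T$, i.e. to show $p = 3$ and $q = r = 0$. The sub-ideal $(\mathbf a) \subseteq J$ is a complete intersection whose generators $a_1,a_2,a_3$ are among the minimal generators of $J$ — they arise as the images in $J$ of a basis of the $K_2^*$-summand of $E_1$ — and lifting $R/(\mathbf a) \twoheadrightarrow R/J$ to a morphism of DG algebras $K_\bullet \to E_\bullet$ transports the exterior products of the Koszul part of $E_\bullet$ to three linearly independent products $\mathsf e_i\mathsf e_j \in A_1A_1$, so $\rank_\Bbbk A_1A_1 \geq 3$. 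Every product not of this shape — a product with a factor among the $n$ generators dual to $F_3$, the triple product $\mathsf e_1\mathsf e_2\mathsf e_3$, and every product in $A_1A_2$ — is represented by a composite of maps each reducing to $0$ modulo $\mathfrak m$ (the off-diagonal blocks of the differentials of $E_\bullet$ are assembled from the $\phi_i$ together with the minimal differentials of $K_\bullet$ and $F_\bullet$), with no compensating unit; in particular $E_3 = F_1^*$ carries no Koszul summand, so the ``top'' product $\mathsf e_1\mathsf e_2\mathsf e_3$, which is what separates class $\mathbf T$ from class $\mathbf C(3)$, vanishes. Hence $A_1A_1$ has rank exactly $3$ and $A_1A_2 = 0$, which forces class $\mathbf T$.

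I expect this last step to be the main obstacle: one must carry an explicit DG‑algebra structure through the mapping cone and its dual and verify, using $\mathbf a \subseteq \mathfrak m I$ at every point, that every product except the three Koszul ones dies and that those three sit in the cyclic incidence pattern of class $\mathbf T$ rather than, say, the pattern of $\mathbf H(3,0)$ (which also has $q = 0$ and a rank‑$3$ product $A_1A_1$). This is precisely the technical bookkeeping the author defers to the appendix.
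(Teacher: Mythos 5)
Your setup is the same as the paper's: link along a regular sequence $\mathbf a\subseteq\mathfrak m I$, resolve $R/J$ by the dual of the mapping cone of $\phi\colon K_\bullet\to F_\bullet$ (this is exactly the complex $\Psi_\bullet$ of \cite[Proposition 1.6]{AKM88} used in the appendix), and get minimality, hence the format $(n+3,m)$, from the fact that $\phi_i\otimes\Bbbk=0$ for $i\geq 1$ because $\mathbf a\subseteq\mathfrak m I$. That part of your argument is correct and is essentially the paper's.

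The genuine gap is the identification of the class, which is the actual content of the theorem, and you concede it yourself in the last paragraph. Your justification that ``every product not of this shape \dots is represented by a composite of maps each reducing to $0$ modulo $\mathfrak m$'' is not an argument: the multiplication on the linkage resolution is not assembled from blocks of the differentials. Concretely, the product of two basis elements coming from $F_3^*$ has coefficients given by the $3\times 3$ minors of $\phi_1$ together with a correction term $\lambda(g^*\wedge g'^*)$, and the product of an $F_3^*$-element with an $F_2^*$-element involves the maps $Y$ and $\mu$; that these terms lie in $\mathfrak m$ times the resolution is exactly the content of the explicit DG structure of \cite{AKM88} (including the nontrivial refinement $\im\lambda\subseteq\mathfrak a_1\mathfrak a_2\mathfrak a_3K_2$), which is what the paper's Lemma \ref{lem:up6} invokes and what you do not supply. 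Likewise, the nonvanishing and linear independence of the three Koszul products in $\Tor_2^R(R/J,\Bbbk)$ requires either the explicit product $[0\,;u]\cdot[0\,;u']=[0\,;uu']$ on the cone or a proof that $\Tor_2^R(R/(\mathbf a),\Bbbk)\to\Tor_2^R(R/J,\Bbbk)$ is injective; you assert the ``transport'' without proving it. Finally, as you note, $p=3$, $q=0$, $r=0$ does not separate $\mathbf T$ from $\mathbf H(3,0)$, so even granting your rank computations the conclusion does not follow; what would close this cheaply is the observation that in class $\mathbf H$ (or $\mathbf B$, $\mathbf G$) no three elements of $A_1$ have pairwise products that are linearly independent, since all nonzero products share the common factor $\ee_{p+1}$ and hence satisfy a forced linear relation, while $\mathbf C(3)$ is excluded because $J$ has $n+3\geq 4$ minimal generators; but this step, or the full product computation of Lemma \ref{lem:up6}, is missing from your proposal as written.
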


\begin{theorem}\label{linkT}
If $I$ is of class $\mathbf{T}$ and format $(m,n)$, then $I$ is linked to ideals described by the following data:
\begin{center}
\begin{tabular}{rcc}
& class & format\\
(i) & $\mathbf{H}(2,0)$ & $(n+3,m-1)$\\
(ii) & $\mathbf{H}(2,2)$ & $(n+3,m-1)$\\
(iii) & $\mathbf{H}(1,2)$ & $(n+3,m-2)$\\
(iv) & $\mathbf{B}$ & $(n+2,m-2)$
\end{tabular}
\end{center}
\end{theorem}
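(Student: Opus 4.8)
The plan is to prove (i)--(iv) by a single mechanism --- an explicit linkage computation --- applied to four different regular sequences contained in $I$. Fix a minimal free resolution $F_\bullet\colon 0\to R^{n}\to R^{m+n-1}\to R^{m}\to R$ of $R/I$ carrying a DG algebra structure whose induced product on $A_\bullet=\Tor^{R}_\bullet(R/I,\Bbbk)$ is the class $\mathbf{T}$ product of \Cref{classification}, and choose a minimal generating set $x_{1},\dots,x_{m}$ of $I$ lifting the basis $\mathsf{e}_{1},\dots,\mathsf{e}_{m}$ of $A_{1}$; the relations $\mathsf{e}_{1}\mathsf{e}_{2}=\mathsf{f}_{3}$, $\mathsf{e}_{2}\mathsf{e}_{3}=\mathsf{f}_{1}$, $\mathsf{e}_{3}\mathsf{e}_{1}=\mathsf{f}_{2}$ then produce a Koszul-type relation among $x_{1},x_{2},x_{3}$ visible in the differentials of $F_\bullet$. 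Recall the mechanics of linkage: given a regular sequence $\mathfrak{c}=(c_{1},c_{2},c_{3})\subseteq I$, the ideal $J:=(\mathfrak{c}:_{R}I)$ is linked to $I$, there is an exact sequence $0\to\omega_{R/I}\to R/\mathfrak{c}\to R/J\to 0$, and a (generally non-minimal) free resolution of $R/J$ is obtained as the mapping cone $\mathcal{C}_\bullet$ of a comparison map $\psi$ from the dual complex $\Hom_{R}(F_\bullet,R)$, reindexed so as to resolve $\omega_{R/I}$, to the Koszul complex $\mathbf{K}_\bullet$ on $\mathfrak{c}$, where $\psi$ lifts the inclusion $\omega_{R/I}\hookrightarrow R/\mathfrak{c}$. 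Numerically $\mathcal{C}_\bullet$ has ranks $(1,\,n+3,\,m+n+2,\,m+1,\,1)$; since $\pd_{R}(R/J)=3$, comparing top Tor modules in the exact sequence above shows that $\mathcal{C}_\bullet$ always collapses at least one copy of $R$ at the top. The format and Tor algebra class of $J$ are then dictated by how much \emph{further} $\mathcal{C}_\bullet$ collapses, which is controlled by which blocks of $\psi$ acquire unit entries --- and this in turn depends on the choice of $\mathfrak{c}$.

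The four cases come from four choices of $\mathfrak{c}$, each of a prescribed shape relative to $x_{1},\dots,x_{m}$ and each realizable for an arbitrary class $\mathbf{T}$ ideal (a sufficiently general choice of the prescribed shape is a regular sequence because $I$ has grade $3$). Heuristically, the more of the distinguished generators $x_{1},x_{2},x_{3}$ one places in $\mathfrak{c}$, the more the internal Koszul relations force $\mathcal{C}_\bullet$ to collapse beyond the guaranteed top split. One then computes that the surviving Betti numbers are $(1,\,n+3,\,m+n+1,\,m-1)$ in cases (i) and (ii) (one extra cancellation), $(1,\,n+3,\,m+n,\,m-2)$ in case (iii) (two extra), and $(1,\,n+2,\,m+n-1,\,m-2)$ in case (iv), where $\mathfrak{c}=(x_{1},x_{2},x_{3})$ (three extra cancellations, now of mixed homological degree); these read off as the formats $(n+3,m-1)$, $(n+3,m-1)$, $(n+3,m-2)$, $(n+2,m-2)$ claimed in (i)--(iv). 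Cases (i) and (ii) share a format but are separated by the invariant $q=\rank_{\Bbbk}B_{1}B_{2}$ of $B_\bullet:=\Tor^{R}_\bullet(R/J,\Bbbk)$, which equals $0$ or $2$ according to a nondegeneracy condition on the generator of $\mathfrak{c}$ that is not one of the $x_{i}$, both alternatives being attainable; this places $J$ in $\mathbf{H}(2,0)$ or $\mathbf{H}(2,2)$ respectively, while case (iii) lands in $\mathbf{H}(1,2)$ and case (iv) in $\mathbf{B}$.

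For each $\mathfrak{c}$ the work then breaks into: (a) verifying that $\mathfrak{c}$ is a regular sequence and identifying $J=(\mathfrak{c}:_{R}I)$; (b) writing $\psi$ out explicitly from the DG structure of $F_\bullet$ and the chosen generators, locating its unit blocks, and performing the minimalization of $\mathcal{C}_\bullet$ to extract the minimal free resolution $G_\bullet$ of $R/J$ together with its differentials --- which determines the format; and (c) endowing $G_\bullet$ with enough multiplicative structure to evaluate $p=\rank_{\Bbbk}B_{1}B_{1}$, $q=\rank_{\Bbbk}B_{1}B_{2}$, and $r=\rank(\delta_{J})$, then matching these invariants against the table in \Cref{classification} to name the class.

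The main obstacle is step (c), together with the bookkeeping it forces in step (b). A mapping cone of DG algebra maps carries no canonical DG algebra structure, so the products on $G_\bullet$ cannot simply be transported along; they must be built by hand from the explicit matrices --- using the DG structures of $F_\bullet$ and $\mathbf{K}_\bullet$ and homotopies recording the failure of $\psi$ to be multiplicative --- and then checked against the DG axioms, all while tracking which rows and columns disappeared in the minimalization. In addition one must confirm that $\mathcal{C}_\bullet$ collapses \emph{exactly} as predicted, since too few cancellations would spoil the format and too many could spoil the class, and one must verify that the nondegeneracy condition separating (i) from (ii) is simultaneously achievable and sufficient to pin down the value of $q$. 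These verifications are elementary but lengthy, which is why they are relegated to the Appendix.
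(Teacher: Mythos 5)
Your overall strategy is the same as the paper's: link $I$ by regular sequences of prescribed shapes, resolve $R/J$ by the (reindexed) mapping cone of the dual comparison map, read the format off the cancellations, and identify the class from the induced products on $\Tor_\bullet^R(R/J,\Bbbk)$. But your prescription for case (iv) is wrong, and the heuristic driving it ("the more of $x_1,x_2,x_3$ in $\mathfrak{c}$, the more collapse, ending with $\mathfrak{c}=(x_1,x_2,x_3)$ for class $\mathbf{B}$") miscounts the cancellations. The collapse is governed by two separate ranks: $\rank(\phi_1\otimes\Bbbk)$, the number of elements of $\mathfrak{c}$ that are minimal generators of $I$, and $\rank(\phi_2\otimes\Bbbk)$, the number of independent pairwise products of those elements that survive in $A_2$ (see \Cref{LinkOption}). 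If you take $\mathfrak{c}=(x_1,x_2,x_3)$ to be the full distinguished triple of a class $\mathbf{T}$ ideal, then all three pairwise products $e_ie_j$ are nonzero in $A_2$, so $\rank(\phi_1\otimes\Bbbk)=3$ \emph{and} $\rank(\phi_2\otimes\Bbbk)=3$; the cone then loses six cancelling pairs, not three, and the resulting format is $(n,m-3)$, not the $(n+2,m-2)$ you claim. The correct choice for (iv) is two members of the distinguished triple together with a third element taken from $\mathfrak{m}I$, giving $\rank(\phi_1\otimes\Bbbk)=2$, $\rank(\phi_2\otimes\Bbbk)=1$ and hence format $(n+2,m-2)$ and, after computing the surviving products (the paper's \Cref{lem:over2}), class $\mathbf{B}$. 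Relatedly, your separation of (i) from (ii) is misattributed: both cases have exactly one minimal generator of $I$ in $\mathfrak{c}$ (the other two elements lie in $\mathfrak{m}I$), and what decides $\mathbf{H}(2,0)$ versus $\mathbf{H}(2,2)$ is whether that single minimal generator does or does not participate in the class $\mathbf{T}$ multiplication, not a nondegeneracy condition on the non-distinguished member of $\mathfrak{c}$.

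A second, structural gap: you correctly identify that the hard part is putting a multiplicative structure on the minimalized cone, but you propose to build it by hand from homotopies measuring the failure of $\psi$ to be multiplicative, and you never carry this out. The paper does not need to: it imports the explicit DG algebra structure on the cone $\Psi_\bullet$ from \cite{AKM88} (the products involving the maps $X$, $Y$, $\lambda$, $\mu$ recorded in the appendix) and then only has to track which basis elements split off under minimalization and which products survive modulo $\mathfrak{m}$; this is exactly what \Cref{lem:up4}, \Cref{lem:up2}, and \Cref{lem:over2} do, and the proof of the theorem is then a short evaluation of those lemmas for suitably chosen bases of $F_\bullet$. Without either citing that structure or actually constructing one, your step (c) remains an unproved assertion, and with the erroneous choice of $\mathfrak{c}$ in case (iv) the computation it is meant to support would in any event produce the wrong format.
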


\begin{theorem}\label{linkG}
If $I$ is of class $\mathbf{G}(r)$ and format $(m,n)$, then $I$ is linked to ideals described by the following data:
\begin{center}
\begin{tabular}{rcc}
& class & format\\
(i) & $\mathbf{H}(3,0)$ & $(n+3,m-1)$\label{linkGup4H30}\\
(ii) & $\mathbf{T}$ & $(n+3,m-2)$\label{linkGup2T}
\end{tabular}
\end{center}
\end{theorem}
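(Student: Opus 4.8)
First I would set up the general linkage construction and then specialize the regular sequence. Fix a minimal free resolution $F_\bullet$ of $R/I$ equipped with a DG algebra structure that induces on $A_\bullet=\Tor^R_\bullet(R/I,\Bbbk)$ the multiplication of class $\mathbf{G}(r)$, with bases $\{\ee_i\},\{\ff_i\},\{\gg_i\}$ as in \ref{classification}. For a regular sequence $\mathfrak{c}=(c_1,c_2,c_3)\subseteq I$ put $J=(\mathfrak{c}:_R I)$, a grade $3$ perfect ideal linked to $I$. Let $K_\bullet$ be the Koszul complex on $c_1,c_2,c_3$, with basis $e_1,e_2,e_3$ of $K_1$, and let $\mu\colon K_\bullet\to F_\bullet$ be a comparison map lifting $R/\mathfrak{c}\twoheadrightarrow R/I$ with $\mu_0=\mathrm{id}$. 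Since $F_\bullet$ is a DG algebra and $K_\bullet$ is generated in homological degree $1$, one may take $\mu$ to be a morphism of DG algebras: $\mu_1(e_i)$ is a chosen lift of $c_i$ to $F_1$, and $\mu$ is multiplicative thereafter. By standard linkage theory, the minimal free resolution $G_\bullet$ of $R/J$ is obtained by minimalizing the mapping cone of the dual map $\Hom_R(\mu,R)\colon\Hom_R(F_\bullet,R)\to\Hom_R(K_\bullet,R)$.

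The format is governed by the relation $A_1A_1=0$, which holds for class $\mathbf{G}(r)$. Indeed $\mu_2(e_i\wedge e_j)=\mu_1(e_i)\mu_1(e_j)$, and its image in $A_2=F_2\otimes_R\Bbbk$ is the product in $A_\bullet$ of the images of $\mu_1(e_i)$ and $\mu_1(e_j)$, hence is $0$; likewise $\mu_3$ reduces to a triple product in $A_3$, which vanishes. So $\mu_2$ and $\mu_3$ have all entries in $\mathfrak{m}$, and the only non-minimal entries of the mapping cone are the identity in homological degree $4$ (from $\mu_0=\mathrm{id}$) together with one unit entry in $\Hom_R(\mu_1,R)$ for each $c_i$ that minimally generates $I$. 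A rank count then gives $G_\bullet\colon 0\to R^{m-k}\to R^{m+n+2-k}\to R^{n+3}\to R\to 0$, where $k=\#\{\,i:c_i\notin\mathfrak{m}I\,\}$. Using $\operatorname{grade}(\mathfrak{m}I)=\operatorname{grade}(I)=3$ together with prime avoidance, I would choose: for (i), $c_1,c_2\in\mathfrak{m}I$ and $c_3$ a minimal generator of $I$ with class $\ee_1$ in $A_1$, so that $k=1$ and $J$ has format $(n+3,m-1)$; for (ii), $c_1\in\mathfrak{m}I$ and $c_2,c_3$ minimal generators of $I$ with classes $\ee_1,\ee_2$ in $A_1$, so that $k=2$ and $J$ has format $(n+3,m-2)$. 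The convention $r\ge 2$ ensures that $\ee_1$ (and $\ee_2$) lie in the range $\ee_1,\dots,\ee_r$ of generators satisfying $\ee_i\ff_i=\gg_1$, which is used next.

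To identify the class, one constructs a DG algebra structure on $G_\bullet$ adapted to the cone construction (the delicate point, addressed below) and computes the multiplication on $A'_\bullet=\Tor^R_\bullet(R/J,\Bbbk)$. One finds $A'_1\cong\Hom_\Bbbk(A_3,\Bbbk)\oplus W$ with $W$ a $3$-dimensional ``Koszul'' summand; in $A'_2$, the self-products of $W$ form the natural Koszul product and span a $(3-k)$-dimensional subspace once the image of $\Hom_R(\mu_1,R)$ is divided out, the products of $W$ against $\Hom_\Bbbk(A_3,\Bbbk)$ use exactly the relations $\ee_i\ff_i=\gg_1$ (for $i\le r$) to span a complementary $k$-dimensional subspace, and every product into $A'_3$ vanishes. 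Hence $\rank_\Bbbk A'_1A'_1=3$ and $q=r=0$, so by the classification $R/J$ is of class $\mathbf{T}$ or $\mathbf{H}(3,0)$. These are distinguished by the dimension of the radical $\{\,v\in A'_1:vA'_1=0\,\}$ of the pairing $A'_1\times A'_1\to A'_2$, which equals $n$ for $\mathbf{T}$ and $n-1$ for $\mathbf{H}(3,0)$; a direct computation of this radical gives dimension $n-1$ when $k=1$ and dimension $n$ when $k=2$, the extra element for $k=2$ being the Koszul direction coming from the deep generator $c_1$, which becomes central (its Koszul products fall into the image of $\Hom_R(\mu_1,R)$ and it annihilates $\Hom_\Bbbk(A_3,\Bbbk)$). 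Therefore $R/J$ is of class $\mathbf{H}(3,0)$ in case (i) and of class $\mathbf{T}$ in case (ii).

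The hard part is the DG algebra structure on the linked resolution, and the bookkeeping of products through minimalization. The mapping cone of a morphism of DG algebras carries no evident DG algebra structure --- even the trivial extension of $\Hom_R(K_\bullet,R)$ by the shifted dual module fails the Leibniz rule, since $\Hom_R(\mu_0,R)$ is a nonzero map --- so the product must be constructed by hand, and one must then follow how the differentials, hence all products, change under the cancellations that minimalize the cone. Getting the radical dimension exactly right --- in particular, seeing why a deep member of $\mathfrak{c}$ contributes a central direction while a minimal-generator member does not --- is where the explicit form of the $\mathbf{G}(r)$ multiplication is indispensable; this is the technical heart of the argument.
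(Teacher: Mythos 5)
Your roadmap is the same as the paper's: link $I$ by a regular sequence whose image in $I/\mathfrak{m}I$ has rank $k=1$ (for (i)) or $k=2$ (for (ii)), with the minimal-generator members chosen among the $e_i$ that participate in the $\mathbf{G}(r)$ products $e_if_i=g_1$; read the format of the link off of $k$ (your count $(n+3,m-k)$ agrees with \Cref{LinkOption}); and identify the class from the products induced on $\Tor^R_\bullet(R/J,\Bbbk)$. Your intermediate claims all check out against the paper's \Cref{lem:up4} and \Cref{lem:up2}: the surviving Koszul self-products give $3-k$ dimensions, the products against $F_3^*$ governed by $e_if_i=g_1$ give $k$ more, all products into degree $3$ vanish, and the resulting ``star'' ($k=1$) versus ``triangle'' ($k=2$) patterns are $\mathbf{H}(3,0)$ and $\mathbf{T}$. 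Your radical criterion for telling these two apart is a correct, basis-free variant of what the paper does by simply matching the product tables of \ref{classification}.

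The genuine gap is that none of these multiplicative claims is actually established in your write-up. Everything after the format count depends on having an explicit DG algebra structure on the (minimalized) cone resolving $R/J$ and on tracking its products through the cancellations, and you defer exactly this: you call it ``the delicate point, addressed below,'' but the final paragraph only records that the naive product on the cone fails Leibniz and that the structure ``must be constructed by hand.'' As written, the technical core of the proof is asserted, not proved. Moreover, it does not need to be constructed by hand: the required multiplication on the cone is given explicitly in \cite[Proposition 1.6 and \S2]{AKM88}, as recalled in the appendix following \Cref{setup:DefLink}, and the bookkeeping through minimalization is precisely the content of \Cref{lem:up4} (your case $k=1$) and \Cref{lem:up2} (your case $k=2$); with those in hand, the class identification is the short computation in the paper's proof of \Cref{linkG}. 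One further point to make explicit (for $k=2$, where $r\geq 2$ is needed): you must argue that the two minimal generators in the regular sequence can be taken to correspond, after a change of basis compatible with the $\mathbf{G}(r)$ normal form, to $e_1$ and $e_2$ — this is the role of \Cref{setup:LinkClass}, and your prime-avoidance remark covers existence of the regular sequence but not this compatibility of bases.
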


\begin{theorem}\label{linkH}
If $I$ is of class $\mathbf{H}(p,q)$ and format $(m,n)$, then $I$ is linked to ideals described by the following data under the following assumptions:
\begin{center}
\begin{tabular}{rccl}
& class & format & assumptions on $p$\\
(i) & $\mathbf{H}(2,1)$ & $(n+3,m-1)$ & $1\leq p$\\
(ii) & $\mathbf{H}(q+2,p)$ & $(n+3,m-1)$ & \\
(iii) & $\mathbf{H}(1,1)$ & $(n+3,m-2)$ & $1\leq p\leq m-2$\\
(iv) & $\mathbf{H}(q+1,p)$ & $(n+3,m-2)$ & $0\leq p\leq m-2$\\
(v) & $\mathbf{H}(0,p)$ & $(n+3,m-3)$ & $2\leq p\leq m-3$
\end{tabular}
\end{center}
\end{theorem}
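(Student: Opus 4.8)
The plan is to fix once and for all a minimal free resolution $F_\bullet$ of $R/I$ carrying a DG algebra structure, a minimal generating set of $I$ whose residues in $A_1$ form the basis $\{\ee_i\}$ of \ref{classification}, and the block normal form for the differentials of $F_\bullet$ determined by the relations $\ee_i\ee_{p+1}=\ff_i$ and $\ee_{p+1}\ff_{p+i}=\gg_i$ (such a normal form is furnished by \cite{AKM88}); then for each of (i)--(v) I would link $I$ through a complete intersection $\mathfrak{c}=(\xi_1,\xi_2,\xi_3)\subseteq I$ chosen to exploit this table. The liaison mechanism is the usual one: if $K_\bullet$ is the Koszul complex on $\xi_1,\xi_2,\xi_3$ with its standard DG structure and $u\colon K_\bullet\to F_\bullet$ is a DG algebra comparison map over $R/\mathfrak{c}\twoheadrightarrow R/I$ (one exists, since $K_\bullet$ is freely generated as a DG algebra by $K_1$), then $J=\mathfrak{c}:I$ has $R/J$ resolved by the mapping cone of the dual map $u^{\ast}\colon\Hom_R(F_\bullet,R)\to\Hom_R(K_\bullet,R)$. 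Cancelling the single unit contributed by $u_0=\mathrm{id}$, this cone takes the form
\[
0\longrightarrow R^{m}\longrightarrow R^{m+n+2}\longrightarrow R^{n+3}\longrightarrow R\longrightarrow 0,
\]
and every remaining unit in its differential sits in a block coming from the reduction $\overline{u}$ of $u$ modulo $\mathfrak{m}$. Since $F_\bullet$ and $K_\bullet$ are minimal and $u$ is a map of DG algebras, $\overline{u}$ is the algebra homomorphism $\textstyle\bigwedge_{\Bbbk}\Bbbk^{3}\to A_\bullet$ with $\epsilon_i\mapsto\overline{\xi_i}\in A_1$; so the relevant blocks are $\epsilon_i\mapsto\overline{\xi_i}$, $\epsilon_i\wedge\epsilon_j\mapsto\overline{\xi_i}\,\overline{\xi_j}$, and $\epsilon_1\wedge\epsilon_2\wedge\epsilon_3\mapsto\overline{\xi_1}\,\overline{\xi_2}\,\overline{\xi_3}$, all products taken in $A_\bullet$, and all Gaussian-elimination corrections lie in $\mathfrak{m}$, so no new units appear.

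To obtain the formats I would choose $\mathfrak{c}$ as follows: in (i) and (ii) let $\xi_1$ lift a basis vector $\ee_j$ with $j\neq p+1$ and take $\xi_2,\xi_3\in\mathfrak{m}I$; in (iii) and (iv) let $\xi_1,\xi_2$ lift distinct $\ee_{j_1},\ee_{j_2}$ with $j_1,j_2\neq p+1$ and take $\xi_3\in\mathfrak{m}I$; in (v) let all three $\xi_i$ lift distinct $\ee_{j_1},\ee_{j_2},\ee_{j_3}$ with $j_i\neq p+1$. In $\mathbf{H}(p,q)$ a product of two of the $\ee$'s is nonzero only when one factor is $\ee_{p+1}$, so in each case the degree-$2$ and degree-$3$ blocks of $\overline{u}$ vanish while its degree-$1$ block has rank $k$, with $k=1$ in (i) and (ii), $k=2$ in (iii) and (iv), and $k=3$ in (v). Hence Gaussian elimination removes exactly $k$ further units and the minimal resolution of $R/J$ has format $(n+3,\,m-k)$---the format claimed in each item. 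The specific linear combination used for each $\xi_i$ (in particular whether $j$ equals $1$ or exceeds $p+1$) is what distinguishes (i) from (ii) and (iii) from (iv), while the hypotheses ``$p\geq 1$'', ``$1\leq p\leq m-2$'', ``$p\leq m-2$'', and ``$2\leq p\leq m-3$ and $q=0$'' are precisely what is needed so that $\xi_1,\xi_2,\xi_3$ can be chosen to be a regular sequence in $I$ with the prescribed residues (a height/prime-avoidance argument) and so that the product computation below lands in the asserted class.

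It then remains to compute $A'_\bullet=\Tor^R_\bullet(R/J,\Bbbk)$ as an algebra and to read off $p'=\rank_{\Bbbk} A'_1A'_1$, $q'=\rank_{\Bbbk} A'_1A'_2$, and $r'$. I would extract these from the explicit minimal differentials of $R/J$ produced above---which inherit a DG algebra structure from those of $F_\bullet$ and $K_\bullet$---using the normal form for $F_\bullet$ to make the needed products computable, and then check case by case that $(p',q')$ is $(2,1)$, $(q+2,p)$, $(1,1)$, $(q+1,p)$, and $(0,p)$, respectively. Here the hypothesis $q=0$ in (v) is exactly what forces $A'_1A'_1=0$, and the bounds on $p$ keep $(p',q')$ within the range of a genuine $\mathbf{H}$-class.

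The rank bookkeeping that gives the formats is routine once the liaison device is set up; the hard part is the last step. There one must show that the reduced mapping cone is genuinely minimal---no unit surviving beyond the predicted $k+1$ cancellations---and that the multiplication on $A'_\bullet$ is exactly the $\mathbf{H}(p',q')$ table rather than, say, that of $\mathbf{G}$ or $\mathbf{B}$. This forces one to carry a classification-adapted normal form for the differentials of $F_\bullet$ through the dualization, the mapping cone, and the Gaussian elimination, and to chase the induced products; it is the technical computation that the introduction defers to the appendix.
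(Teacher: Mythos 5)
Your overall framework (link through a regular sequence inside $I$, resolve $R/J$ by the dual mapping cone, count the units contributed by the reduction of the comparison map to get the formats) is the same as the paper's, but there are two genuine gaps. First, your prescription for choosing the linking sequence is wrong for cases (ii), (iv), and (v). In class $\mathbf{H}(p,q)$ the only nonzero products $A_1\cdot A_1$ and $A_1\cdot A_2$ have the distinguished element $\ee_{p+1}$ as a degree-one factor; consequently, if every $\overline{\xi_i}$ avoids $\ee_{p+1}$ (your ``$j\neq p+1$'' rule), the induced products on the linked Tor algebra coming from the comparison map can only ever produce classes with $p'\leq 2$ and $q'\leq 1$ (such as $\mathbf{H}(2,1)$, $\mathbf{H}(2,0)$, $\mathbf{H}(1,1)$), never $\mathbf{H}(q+2,p)$ or $\mathbf{H}(q+1,p)$ for general $p,q$. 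To obtain (ii) one must link through a lift of $\ee_{p+1}$ itself (then the products $\ee_i\ee_{p+1}=\ff_i$ and $\ee_{p+1}\ff_{p+i}=\gg_i$ descend and give $p'=q+2$, $q'=p$); for (iv) one links through $\ee_{p+1}$ together with a generator not involved in the multiplication (this is where $p\leq m-2$ enters), and for (v) through $\ee_{p+1}$ and two uninvolved generators (whence $p\leq m-3$). Your ``whether $j$ equals $1$ or exceeds $p+1$'' distinction cannot separate (i) from (ii).

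Second, the class identification --- which is the actual content of the theorem --- is not carried out, and it is not true that the products on the minimalized cone are read off from $\overline{u}$ alone. The DG structure on the cone (the appendix's products, with the correction maps $X$, $Y$, $\lambda$, $\mu$) contributes potential products $\EE_i\cdot\EE_j$ and $\EE_i\cdot\FF_j$ that do not come from $\overline{u}$ and need not vanish when $\rank(\phi_1\otimes\Bbbk)=3$. This is exactly what makes case (v) delicate: after linking one gets the explicit products $\EE_{n+1}\cdot\FF_i=\GG_{i+3}$ for $1\leq i\leq p$, so $q'\geq p$, but ruling out $q'>p$ requires controlling the $Y$-type products; the paper does this by first invoking \cite[Proposition 3.2(b)]{CVW20Linkage} to force class $\mathbf{H}(0,q')$ and then a change-of-basis argument (via \cite[Lemma 2.5(a)]{AKM88}) showing all products $\EE_i\cdot\FF_j$ land in the span of $\GG_4,\dots,\GG_{p+3}$. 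Your remark that ``$q=0$ forces $A_1'A_1'=0$'' addresses only the type-(b) products and says nothing about the $X$- and $Y$-contributions, so the step you defer as ``check case by case'' is precisely the missing proof.
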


\begin{chunk}\label{realizeGor}
Recall that an ideal is called \emph{Gorenstein of grade g} provided it is perfect of grade $g$ with format $(m,1)$. Let $m\geq 3$. For format $(m,1)$, there are no permissible classes when $m$ is even; see \cite[Corollary 2.2]{BE77}. When $m=3$, the only permissible class is $\mathbf{C}(3)$ and when $m\geq 5$ is odd, the only permissible class is $\mathbf{G}(m)$; see Remark (ii) after \cite[Definition 2.2]{AKM88}. These classes can all be realized; see \cite[3.9.1]{Avramov12}.

Note that there are ideals of class $\mathbf{G}(r)$ that are not Gorenstein; they have $n\geq 2$. These ideals are not considered in this paper, but the realizability question remains open in this case. Sporadic results are found in works by Christensen, Veliche, and Weyman \cite{CVW19Trim}, VandeBogert \cite{Vandebogert20Trim}, Ferraro and Hardesty \cite{FH23}, and Ferraro and Moore \cite{FM24}.
\end{chunk}

\begin{chunk}\label{realizeHS}
Let $p\geq 3$. Class $\mathbf{H}(p,p-1)$ has a permissible format $(p+1,p-1)$. Ideals with this class and format are known as hypersurface sections and can all be realized; see \cite[3.9.2(b)]{Avramov12}.
\end{chunk}

For the sake of completeness, we include the following lemmas that realize permissible formats for two families of classes that have already been realized in the literature. The first family is known as almost complete intersections and was first realized in \cite[Remark (1)]{Avramov81}; see also \cite[Remark 4.2]{CVW20Linkage} in light of \Cref{realizeGor} above. The second family was realized in \cite[Theorem 4.4]{Brown84}. We include proofs that use Theorems \ref{linkT}-\ref{linkH} above. The arguments in subsequent sections will follow a similar style. 

\begin{lemma}\label{lem:RealizeACI}
Under the following assumptions on $n$, one can realize the following classes associated with the given formats:
\begin{center}
\begin{tabular}{rccc}
& class & format & assumptions on $n$\\
(i) & $\mathbf{H}(3,2)$ & $(4,2)$ \\
(ii) & $\mathbf{H}(3,0)$ & $(4,n)$ & $n\geq 4$ even \label{ACIii}\\
(iii) & $\mathbf{T}$ & $(4,n)$ & $n\geq 3$ odd \label{ACIiii}\\
\end{tabular}
\end{center}
\end{lemma}

\begin{proof}
Let $I$ denote a Gorenstein ideal of class $\mathbf{G}(m)$ and format $(m,1)$ for some $m\geq 5$ odd as in \Cref{realizeGor}. Let $m=5$. By \cite[Proposition 3.1(b)]{CVW20Linkage}, $I$ is linked to an ideal $I'$ of class $\mathbf{H}(p,q)$ and format $(4,2)$. By \cite[Theorem 4.1]{CVW20Linkage}, $I'$ must have class $\mathbf{H}(3,2)$, realizing (i). By \Cref{linkG}(i), $I$ is linked to an ideal of class $\mathbf{H}(3,0)$ and format $(4,m-1)$ for $m-1\geq 4$ even, realizing (ii). By \Cref{linkG}(ii), $I$ is linked to an ideal of class $\mathbf{T}$ and format $(4,m-2)$ for $m-2\geq 3$ odd, realizing (iii).
\end{proof}

\begin{lemma}\label{lem:RealizeType2}
Under the following assumptions on $m$, one can realize the following classes associated with the given formats:
\begin{center}
\begin{tabular}{rccc}
& class & format & assumptions on $m$\\
(i) & $\mathbf{H}(1,2)$ & $(m,2)$ & $m\geq 6$ even\\
(ii) & $\mathbf{B}$ & $(m,2)$ & $m\geq 5$ odd\\
\end{tabular}
\end{center}
\end{lemma}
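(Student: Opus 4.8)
The plan is to follow the same template as the proof of \Cref{lem:RealizeACI}: realize each of the target ideals as a link of a class $\mathbf{T}$ ideal of format $(4,\bullet)$, whose existence is already guaranteed by \Cref{lem:RealizeACI}(iii), and then read off the class and format of the link directly from \Cref{linkT}. The only thing that needs checking is the elementary bookkeeping of parities and bounds.

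For part (i), I would take $m\geq 6$ even, so that $m-3\geq 3$ is odd. By \Cref{lem:RealizeACI}(iii) there is an ideal $I$ of class $\mathbf{T}$ and format $(4,m-3)$. Then \Cref{linkT}(iii), applied to $I$, shows that $I$ is linked to an ideal of class $\mathbf{H}(1,2)$ and format $\bigl((m-3)+3,\,4-2\bigr)=(m,2)$, which is the desired conclusion.

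For part (ii), I would take $m\geq 5$ odd, so that $m-2\geq 3$ is odd. By \Cref{lem:RealizeACI}(iii) there is an ideal $I$ of class $\mathbf{T}$ and format $(4,m-2)$. Then \Cref{linkT}(iv), applied to $I$, shows that $I$ is linked to an ideal of class $\mathbf{B}$ and format $\bigl((m-2)+2,\,4-2\bigr)=(m,2)$, again as desired.

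I do not expect any genuine difficulty here. All of the substance lies in \Cref{linkT} (whose proof is deferred to the appendix) and in the already-established \Cref{lem:RealizeACI}(iii); the argument above is pure bookkeeping. The only point requiring a moment's care is verifying the parity conditions, namely that $m\geq 6$ even forces $m-3$ to be an odd integer at least $3$, and that $m\geq 5$ odd forces $m-2$ to be an odd integer at least $3$, so that \Cref{lem:RealizeACI}(iii) is applicable in each case.
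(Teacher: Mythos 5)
Your proposal is correct and is essentially the paper's own proof: both realize the target ideals as links of the class $\mathbf{T}$ almost complete intersections of format $(4,t)$ with $t\geq 3$ odd from \Cref{lem:RealizeACI}(iii), applying \Cref{linkT}(iii) and \Cref{linkT}(iv) respectively; you merely parameterize by the target $m$ rather than by the source $n$. The parity and bound bookkeeping checks out.
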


\begin{proof}
Let $I$ denote an ideal of class $\mathbf{T}$ and format $(4,n)$ for some $n\geq 3$ odd, as constructed in \Cref{lem:RealizeACI}(iii). By \Cref{linkT}(iii), $I$ is linked to an ideal of class $\mathbf{H}(1,2)$ and format $(n+3,2)$ for $n+3\geq 6$ even, realizing (i). By \Cref{linkT}(iv), $I$ is linked to an ideal of class $\mathbf{B}$ and format $(n+2,2)$ for $n+2\geq 5$ odd, realizing (ii).
\end{proof}

\section{Class $\mathbf{T}$}

In this section, we survey previous results on realizability of class $\mathbf{T}$ and show that all permissible formats for class $\mathbf{T}$ are realizable.

\begin{lemma}\label{lem:linktoT}
If there exists an ideal of format $(m,n)$, then there exists an ideal of class $\mathbf{T}$ and format $(m+3,n+3)$. 
\end{lemma}

\begin{proof}
By two applications of \Cref{linktoT}, if $I$ is an ideal of format $(m,n)$, then $I$ linked to an ideal of class $\mathbf{T}$ and format $(m+3,n+3)$.
\end{proof}

\begin{theorem}\label{thm:realizeT}
For a format $(m,n)$ there exists an ideal of class $\mathbf{T}$ if and only if 
\begin{enumerate}[label=\roman*.]
    \item $m=4$ and $n\geq 3$ is odd, or
    \item $m\geq 5$ and $n\geq 4$.
\end{enumerate}
\end{theorem}

\begin{proof}
First, we show that the formats listed in the statement are indeed the only permissible formats for class $\mathbf{T}$. Let $I$ denote an ideal of class $\mathbf{T}$ and format $(m,n)$. By \cite[3.4.1(a)]{Avramov12} we must have $m\geq 4$. If $m=4$, then $n\geq 3$ is odd; see \cite[3.4.2]{Avramov12}. If $m\geq 5$, then by \Cref{realizeGor}, \cite[Corollary 4.5]{Brown84}, and by \cite[Corollary to Theorem 2.1]{Sanchez89} $n\geq 4$; see also \cite[7.2]{CVW20Linkage}.

Now, we show that there exists an ideal of class $\mathbf{T}$ for each of the formats listed in the statement. \Cref{lem:RealizeACI} shows that for all formats $(4,n)$ with $n\geq 3$ odd, there exists an ideal of class $\mathbf{T}$. 

We prove that there exist ideals of class $\mathbf{T}$ with the following formats for integers $m\geq5$, $n\geq 4$, and $k\geq 1$:
\begin{align}
(3k+2,n) && n\geq 3k+2\geq 5 \label{classTn2}\\
(m,3k+2) && m\geq 3k+5\geq 8 \label{classTm2}\\
(3k+3,n) && n\geq 3k+3\geq 6 \label{classTn0}\\
(m,3k+3) && m\geq 3k+6\geq 9 \label{classTm0}\\
(3k+4,n) && n\geq 3k+3\geq 6 \label{classTn1}\\
(m,3k+1) && m\geq 3k+3\geq 6 \label{classTm1}
\end{align}

First, we realize the base case $k=1$ for each of the formats \eqref{classTn2}-\eqref{classTm1}, which are the formats:
\begin{align*}
(5,n), n\geq 5 \qquad &(m,5), m\geq 8 \qquad 
(6,n), n\geq 6 \\ (m,6), m\geq 9 \qquad
&(7,n), n\geq 6 \qquad (m,4), m\geq 6.
\end{align*}

Let $I$ denote an ideal with format $(m,2)$ for $m\geq 5$ as constructed in \Cref{lem:RealizeType2}. Let $J$ denote an ideal with format $(m,3)$ for $m\geq 6$. When $m=6$, such an ideal is constructed in \cite[Theorem 2]{CV14Ex} and when $m\geq 7$, such an ideal is constructed in \cite[Corollary 5.9]{Vandebogert20Trim}. Let $K$ denote an almost complete intersection with format $(4,n)$ for some $n\geq 3$ as constructed in \Cref{lem:RealizeACI}. The first, third, and sixth base cases are realized by applying \Cref{linktoT} to ideals $I$, $J$, and $K$, respectively. The second, fourth, and fifth base cases are realized by applying \Cref{lem:linktoT} to ideals $I$, $J$, and $K$, respectively.

Now, we proceed with the induction step and assume $k>1$. This will be the same for all of the families \eqref{classTn2}-\eqref{classTm1}, so we only record the proof for \eqref{classTn2}. Assume we can realize an ideal of class $\mathbf{T}$ with format $(3k+2,n)$, where $n\geq 3k+2$. By \Cref{lem:linktoT}, there exists an ideal of class $\mathbf{T}$ with format $(3k+5,n+3)=(3(k+1)+2,n+3)$, where $n+3\geq 3k+5=3(k+1)+2$. This realizes the next ideal in the family, completing the induction step. 

Now, we summarize the ideals realized above to illustrate the steps needed to complete the proof. 

Realizability of ideals of class $\mathbf{T}$ and with formats listed in the families \eqref{classTn2}, \eqref{classTn0}, and \eqref{classTn1} above is equivalent to the realizability of ideals of class $\mathbf{T}$ and formats 
\begin{itemize}
    \item $(m,n)$ with $5\leq m\leq n$, and 
    \item $(m,m-1)$ with $7\leq m$ and $m\equiv_3 1$.
\end{itemize}

Moreover, realizability of ideals of class $\mathbf{T}$ and with formats listed in the families \eqref{classTm2}, \eqref{classTm0}, and \eqref{classTm1} above is equivalent to the realizability of ideals of class $\mathbf{T}$ and formats 
\begin{itemize}
    \item $(m,n)$ with $4\leq n\leq m-3$, and 
    \item $(n+2,n)$ with $4\leq n$ and $n\equiv_3 1$. 
\end{itemize}

We visualize these realizations for $5\leq m\leq 12$ and $4\leq n\leq 10$ in \Cref{tab:3}. Formats that have been realized are represented by boxes with a number inside. The number corresponds to the family realizing that format as outlined above. Notice that the bottom bullets above are simply a subset of the formats listed in the families \eqref{classTn1} and \eqref{classTm1}. The formats realized by the bottom bullets are bold in \Cref{tab:3} below.

\begin{table}[h]
\centering
\begin{tikzpicture}[x=1.3cm,y=.5cm]
\draw[step=1.0,gray,very thin](0,0) grid (9,8); 

\draw[black,thick](0,0) -- (9,0);
\draw[black,thick](0,0) -- (0,8);
\draw[black,thick](0,8) -- (9,8);
\draw[black,thick](9,0) -- (9,8);
\draw[black,thick](1,0) -- (1,8);
\draw[black,thick](0,7) -- (9,7);

\draw (0.5,6.5) node {$n=10$}; 
\draw (0.5,5.5) node {$n=9$}; 
\draw (0.5,4.5) node {$n=8$};
\draw (0.5,3.5) node {$n=7$}; 
\draw (0.5,2.5) node {$n=6$}; 
\draw (0.5,1.5) node {$n=5$}; 
\draw (0.5,0.5) node {$n=4$};

\draw (1.5,7.5) node {$m=5$}; 
\draw (2.5,7.5) node {$m=6$}; 
\draw (3.5,7.5) node {$m=7$};
\draw (4.5,7.5) node {$m=8$}; 
\draw (5.5,7.5) node {$m=9$}; 
\draw (6.5,7.5) node {$m=10$}; 
\draw (7.5,7.5) node {$m=11$};
\draw (8.5,7.5) node {$m=12$};

\draw (1.5,6.5) node {(\ref{classTm2})};
\draw (1.5,5.5) node {(\ref{classTm2})};
\draw (1.5,4.5) node {(\ref{classTm2})};
\draw (1.5,3.5) node {(\ref{classTm2})};
\draw (1.5,2.5) node {(\ref{classTm2})};
\draw (1.5,1.5) node {(\ref{classTm2})};

\draw (2.5,6.5) node {(\ref{classTm0})};
\draw (2.5,5.5) node {(\ref{classTm0})};
\draw (2.5,4.5) node {(\ref{classTm0})};
\draw (2.5,3.5) node {(\ref{classTm0})};
\draw (2.5,2.5) node {(\ref{classTm0})};

\draw (3.5,6.5) node {(\ref{classTm1})};
\draw (3.5,5.5) node {(\ref{classTm1})};
\draw (3.5,4.5) node {(\ref{classTm1})};
\draw (3.5,3.5) node {(\ref{classTm1})};
\draw (3.5,2.5) node {\textbf{(\ref{classTm1})}};

\draw (4.5,6.5) node {(\ref{classTm2})};
\draw (4.5,5.5) node {(\ref{classTm2})};
\draw (4.5,4.5) node {(\ref{classTm2})};

\draw (5.5,6.5) node {(\ref{classTm0})};
\draw (5.5,5.5) node {(\ref{classTm0})};

\draw (6.5,6.5) node {(\ref{classTm1})};
\draw (6.5,5.5) node {\textbf{(\ref{classTm1})}};

\draw (2.5,0.5) node {\textbf{(\ref{classTn1})}};
\draw (3.5,0.5) node {(\ref{classTn1})};
\draw (4.5,0.5) node {(\ref{classTn1})};
\draw (5.5,0.5) node {(\ref{classTn1})};
\draw (6.5,0.5) node {(\ref{classTn1})};
\draw (7.5,0.5) node {(\ref{classTn1})};
\draw (8.5,0.5) node {(\ref{classTn1})};

\draw (4.5,1.5) node {(\ref{classTn2})};
\draw (5.5,1.5) node {(\ref{classTn2})};
\draw (6.5,1.5) node {(\ref{classTn2})};
\draw (7.5,1.5) node {(\ref{classTn2})};
\draw (8.5,1.5) node {(\ref{classTn2})};

\draw (5.5,2.5) node {(\ref{classTn0})};
\draw (6.5,2.5) node {(\ref{classTn0})};
\draw (7.5,2.5) node {(\ref{classTn0})};
\draw (8.5,2.5) node {(\ref{classTn0})};

\draw (5.5,3.5) node {\textbf{(\ref{classTn1})}};
\draw (6.5,3.5) node {(\ref{classTn1})};
\draw (7.5,3.5) node {(\ref{classTn1})};
\draw (8.5,3.5) node {(\ref{classTn1})};

\draw (7.5,4.5) node {(\ref{classTn2})};
\draw (8.5,4.5) node {(\ref{classTn2})};

\draw (8.5,5.5) node {(\ref{classTn0})};

\end{tikzpicture}
\caption{Family of ideals realizing format $(m,n)$.}  \label{tab:3}
\end{table}

Therefore, it suffices to prove the realizability of ideals of class $\mathbf{T}$ and formats 
\begin{itemize}
\item $(m,m-1)$ with $5\leq m$, and 
\item $(m,m-2)$ with $7\leq m$.
\end{itemize}

Let $I$ denote a hypersurface section with format $(p+1,p-1)$ for $p\geq 3$ as constructed in \Cref{realizeHS}. By \Cref{linktoT}, $I$ is linked to an ideal of class $\mathbf{T}$ and format $(p+2,p+1)$ for $p+2\geq 5$. By \Cref{lem:linktoT}, $I$ is linked to an ideal of class $\mathbf{T}$ and format $(p+4,p+2)$ for $p+4\geq 7$. These realizations account for the remaining formats outlined above.
\end{proof}

\section{Class $\mathbf{B}$}

In this section, we survey previous results on realizability of class $\mathbf{B}$ and show that all permissible formats for class $\mathbf{B}$ are realizable. 

\begin{theorem}\label{thm:realizeB}
For each format $(m,n)$ there exists an ideal of class $\mathbf{B}$ if and only if 
\begin{enumerate}[label=\roman*.]
    \item $m\geq 5$ is odd and $n=2$, or
    \item $m\geq 6$ and $n\geq 3$.
\end{enumerate}
\end{theorem}

\begin{proof}
First, we show that the formats listed in the statement are indeed the only permissible formats for class $\mathbf{B}$. Let $I$ denote an ideal of class $\mathbf{B}$ and with format $(m,n)$. By \cite[3.4.1]{Avramov12} and \cite[3.4.2]{Avramov12} we must have $m\geq 5$ and by \Cref{realizeGor} we must have $n\geq 2$. Moreover, if $m=5$, then $n=2$; see \cite[Theorem 4.5]{CVW20Linkage}, and if $n=2$, then $m$ is odd, first proved in \cite[Theorem 4.4]{Brown84}; see also \cite[3.4.3]{AKM88}.

Now, we show that there exists an ideal of class $\mathbf{B}$ for each of the formats listed in the statement. \Cref{lem:RealizeType2} shows that we can realize all permissible formats for class $\mathbf{B}$ with $n=2$. It remains to show realizability for $m\geq 6$ and $n\geq 3$.

Let $I$ denote an ideal of class $\mathbf{T}$ with format $(m,n)$ for some $m\geq 5$ and $n\geq 4$, realized in \Cref{thm:realizeT}. By \Cref{linkT}(iv), $I$ is linked to an ideal of class $\mathbf{B}$ with format $(n+2,m-2)$. Since $n+2\geq 6$ and $m-2\geq 3$, we obtain ideals of class $\mathbf{B}$ within the stated bounds.
\end{proof}
 
\section{Class $\mathbf{H}$}

In this section, we survey previous results on realizability of class $\mathbf{H}$ and realize a family of classes not yet constructed in the literature, the \emph{boundary classes} $\mathbf{H}(n-1,q)$ and $\mathbf{H}(p,m-4)$. The following remark outlines the permissible boundary classes.

\begin{remark}\label{rmk:Hoptions}
Let $I$ be a perfect ideal of class $\mathbf{H}$ and format $(m,n)$. By \cite[3.4.1]{Avramov12} we must have $m\geq 4$ and by \Cref{realizeGor} we must have $n\geq 2$. The following restrictions on $p$ and $q$ are known given the values of $m$ and $n$:

\begin{enumerate}[label=\arabic*.]
    \item $p\leq\min\{m-1,n+1\}$, by \cite[Theorem 3.1]{Avramov12}. \label{eq:pMax}
    \item $q\leq\min\{m-2,n\}$, by \cite[Theorem 3.1]{Avramov12}.\label{eq:qMax}
\end{enumerate}
The following result shows that these upper bounds are achieved simultaneously:
\begin{enumerate}[label=\arabic*.]
\setcounter{enumi}{2}
    \item By \cite[Corollary 3.3]{Avramov12}.\label{eq:pqMax}The following are equivalent: 
    \[
    p=n+1 \textnormal{ and } q=m-2,\qquad p=m-1,\qquad q=n
    \]
\end{enumerate}
Ideals with the above values for $p$ and $q$ are hypersurface sections, discussed in \Cref{realizeHS}. Since these highest values of $p$ and $q$ occur only when $m=n+2$, we do not consider them the ``boundary" for the values of $p$ and $q$ for all formats $(m,n)$. Instead, we consider the boundary to be the next highest bounds on $p$ and $q$, which are indeed the highest bounds for formats other than $(m,m-2)$:
\begin{enumerate}[label=\arabic*.]
\setcounter{enumi}{3}
    \item If $p\neq n+1$, then $p\leq n-1$, by \cite[Theorem 1.1]{CVW20Linkage}. \label{eq:pMax2}
    \item If $q\neq m-2$, then $q\leq m-4$ by \cite[Theorem 1.1]{CVW20Linkage}.\label{eq:qMax2}
\end{enumerate}

Therefore, we consider classes with the values $p=n-1$ and $q=m-4$ the \emph{boundary classes} for a given format $(m,n)$. In this case, we have additional restrictions:
\begin{enumerate}[label=\arabic*.]
\setcounter{enumi}{5}
    \item If $p=n-1$, then $q\equiv_2 m-4$, by \cite[Theorem 1.1]{CVW20Linkage}. \label{eq:pBoundary}
    \item If $q=m-4$, then $p\equiv_2 n-1$, by \cite[Theorem 1.1]{CVW20Linkage}. \label{eq:qBoundary}
\end{enumerate}

In the cases $m=4$ or $n=2$, there are further restrictions; see \Cref{prop:Hm4n2}. Therefore, when $m\geq 5$ and $n\geq 3$, the permissible boundary classes are:

\begin{center}
\begin{tabular}{cccc}
class & format & restrictions\\
$\mathbf{H}(n-1,q)$ & $(m,n)$ & $q\leq\min\{m-4,n\}  \text{ and } q\equiv_2 m-4$\\
$\mathbf{H}(p,m-4)$ & $(m,n)$ & $p\leq\min\{m-1,n-1\}  \text{ and } p\equiv_2 n-1$
\end{tabular}
\end{center}
\end{remark}

\begin{theorem}\label{thm:realizeH}
For each format $(m,n)$ with $m\geq 5$ and $n\geq 3$, one can realize ideals of class $\mathbf{H}$ as follows: 
\begin{align*}
&\mathbf{H}(n-1,q), \text{ with } q\leq\min\{m-4,n\}  \text{ and } q\equiv_2 m-4\\
&\mathbf{H}(p,m-4), \text{ with } p\leq\min\{m-1,n-1\}  \text{ and } p\equiv_2 n-1
\end{align*}
That is, one can realize all permissible boundary classes for formats $(m,n)$ with $m\geq 5$ and $n\geq 3$.
\end{theorem}

\begin{proof} 
\newcounter{Hproof}
First, we realize the permissible boundary classes for $n=3$ and $m=5$. Let $n=3$. The boundary classes have $p=n-1=2$. By \Cref{rmk:Hoptions}, we have that $q\leq \min\{m-4,3\}$ and $q\equiv_2 m-4$, so we must realize the following classes:
\addtocounter{Hproof}{1}
\begin{center}
\begin{tabular}{lcccccc}
\newref{\theHproof}$(q)$\label{n=3} &\quad& {class} &\quad& \textnormal{format} &\quad& \textnormal{parameters}\\
\newref{\theHproof}$(0)$\label{n=3(0)} &\quad& $\mathbf{H}(2,0)$ &\quad& $(m,3)$ &\quad& $m\geq 6$ even\\
\newref{\theHproof}$(1)$\label{n=3(1)} &\quad& $\mathbf{H}(2,1)$ &\quad& $(m,3)$ &\quad& $m\geq 5$ odd\\
\newref{\theHproof}$(2)$\label{n=3(2)} &\quad& $\mathbf{H}(2,2)$ &\quad& $(m,3)$ &\quad& $m\geq 6$ even\\
\newref{\theHproof}$(3)$\label{n=3(3)} &\quad& $\mathbf{H}(2,3)$ &\quad& $(m,3)$ &\quad& $m\geq 7$ odd
\end{tabular}
\end{center}

Let $I$ denote an ideal of class $\mathbf{T}$ and format $(4,t)$ for $t\geq 3$ odd, listed in \Cref{lem:RealizeACI}(iii). By \Cref{linkT}(i), $I$ is linked to an ideal of class $\mathbf{H}(2,0)$ and format $(t+3,3)$, realizing the family \ref{n=3}$(0)$. By \Cref{linkT}(ii), $I$ is also linked to an ideal of class $\mathbf{H}(2,2)$ and format $(t+3,3)$, realizing the family \ref{n=3}$(2)$. 

Let $J$ denote an ideal of class $\mathbf{H}(3,2)$ and format $(4,2)$, listed in \Cref{lem:RealizeACI}(i). By \Cref{linkH}(i), $J$ is linked to an ideal of class $\mathbf{H}(2,1)$ and format $(5,3)$, the first ideal in the family \ref{n=3}$(1)$. 

Let $K$ denote an ideal of class $\mathbf{H}(3,0)$ and format $(4,t)$ for $t\geq 4$ even, listed in \Cref{lem:RealizeACI}(ii). By \Cref{linkH}(i), $K$ is linked to an ideal of class $\mathbf{H}(2,1)$ and format $(t+3,3)$, realizing the remaining classes in the family \ref{n=3}$(1)$. By \Cref{linkH}(ii), $K$ is also linked to an ideal of class  $\mathbf{H}(2,3)$ and format $(t+3,3)$, realizing the family \ref{n=3}$(3)$. 

Now we set $m=5$. The boundary classes have $q=m-4=1$. By \Cref{rmk:Hoptions}, we have that $p\leq \min\{n-1,4\}$ and $p\equiv_2 n-1$, so we must realize the following classes:
\addtocounter{Hproof}{1}
\begin{center}
\begin{tabular}{lcccccc}
\newref{\theHproof}$(p)$\label{m=5} &\quad& {class} &\quad& \textnormal{format} &\quad& \textnormal{parameters}\\
\newref{\theHproof}$(0)$\label{m=5(0)} &\quad& $\mathbf{H}(0,1)$ &\quad& $(5,n)$ &\quad& $n\geq 3$ odd\\
\newref{\theHproof}$(1)$\label{m=5(1)} &\quad& $\mathbf{H}(1,1)$ &\quad& $(5,n)$ &\quad& $n\geq 4$ even\\
\newref{\theHproof}$(2)$\label{m=5(2)} &\quad& $\mathbf{H}(2,1)$ &\quad& $(5,n)$ &\quad& $n\geq 3$ odd\\
\newref{\theHproof}$(3)$\label{m=5(3)} &\quad& $\mathbf{H}(3,1)$ &\quad& $(5,n)$ &\quad& $n\geq 4$ even\\
\newref{\theHproof}$(4)$\label{m=5(4)} &\quad& $\mathbf{H}(4,1)$ &\quad& $(5,n)$ &\quad& $n\geq 5$ odd
\end{tabular}
\end{center}

Let $I$ denote an ideal of class $\mathbf{H}(2,0)$ and format $(s,3)$ for $s\geq 6$ even, listed in \ref{n=3}$(0)$. By \cite[Proposition 3.3]{CVW20Linkage}, the ideal $I$ is linked to an ideal $I'$ of format $(5,s-3)$ with $p=0$ and $q\geq 1$. Since $p=0$, the ideal $I'$ must be of class $\mathbf{G}(r)$ or $\mathbf{H}(0,q)$, by \ref{classification}. Since $m=5$, if $I'$ is of class $\mathbf{G}(r)$, then we must have $s-3=1$, by \cite[Theorem 4.5(b)]{CVW20Linkage}. However, $s-3\geq3$, so $I'$ must be of class $\mathbf{H}(0,q)$. By \Cref{rmk:Hoptions}.\ref{eq:qMax}, we have that $q\leq s-3$. However, by \Cref{rmk:Hoptions}.\ref{eq:pqMax}, $p\neq 5-1$ implies $q\neq s-3$. Therefore by \Cref{rmk:Hoptions}.\ref{eq:qMax2}, we have that $q\leq s-4= 5-4=1$, but since $q\geq 1$, we must have $q=1$. Hence $I'$ is of class $\mathbf{H}(0,1)$ and we have realized the family of classes \ref{m=5}$(0)$.

Let $J$ denote an ideal of class $\mathbf{H}(1,2)$ and format $(s,2)$ for $s\geq 6$ even, listed in \Cref{lem:RealizeType2}(i). By \Cref{linkH}(iii), $J$ is linked to an ideal of class $\mathbf{H}(1,1)$ and format $(5,s-2)$, realizing the family \ref{m=5}$(1)$. By \Cref{linkH}(i), $J$ is linked to an ideal of class $\mathbf{H}(2,1)$ and format $(5,s-1)$. Noticing that the ideal of class $\mathbf{H}(2,1)$ and format $(5,3)$ was realized in \ref{n=3}$(1)$ above, this fully realizes the family \ref{m=5}$(2)$. By \Cref{linkH}(iv), $J$ is linked to an ideal of class $\mathbf{H}(3,1)$ and format $(5,s-2)$, realizing the family \ref{m=5}$(3)$. By \Cref{linkH}(ii), $J$ is linked to an ideal of class $\mathbf{H}(4,1)$ and format $(5,s-1)$, realizing the family \ref{m=5}$(4)$. 

Now, we proceed with a double induction argument on $m$ and $n$, broken into two parts with two steps each. Let $k$ be a positive integer. 
In the first part, we show that the realizability of boundary classes for $n=2k+1$ implies the realizability of boundary classes for $m=2k+4$. Then, we show that the realizability of boundary classes for $m=2k+4$ implies the realizability of classes for $n=2(k+1)+1=2k+3$. The base case for this part occurs when $n=3$, which was completed above.
In the second part, we show that the realizability of  classes for $m=2k+3$ implies the realizability of boundary classes for $n=2k+2$. Then, we show that the realizability of boundary classes for $n=2k+2$ implies the realizability of boundary classes for $m=2(k+1)+3=2k+5$. The base case for this part occurs when $m=5$, which was completed above.
Hence the first part realizes all boundary classes for formats $(m,n)$ when $m$ is even or $n$ is odd, and the second part realizes all boundary classes for formats $(m,n)$ when $m$ is odd or $n$ is even, covering all cases.

\newpage
\noindent\underline{Part 1a: realizability for $n=2k+1$ implies realizability for $m=2k+4$}

Assume that we can realize all permissible boundary classes for formats $(m,2k+1)$. That is, by \Cref{rmk:Hoptions}, assume we can realize classes $\mathbf{H}(2k,q)$ for formats $(m,2k+1)$ when $m\geq 2k+1$, $q\equiv_2 m-4$ and $q\leq \min\{m-4,2k+1\}$. We denote this family of classes as follows for $0\leq q\leq 2k+1$:
\addtocounter{Hproof}{1}
\begin{center}
\begin{tabular}{ccccccccc}
 &\quad& class &\quad& \textnormal{format} &\quad& \textnormal{parameters}\\
\newref{\theHproof}$(q)$\label{n=2k+1}  &\quad& $\mathbf{H}(2k,q)$  &\quad& $(m,2k+1)$ &\quad& $m\geq\max\{q+4,2k+1\}$, $q\equiv_2 m-4$ 
\end{tabular}
\end{center}

By \Cref{rmk:Hoptions}, the permissible boundary classes for formats $(2k+4,n)$ are $\mathbf{H}(p,2k)$ with $n\geq 2k$, $p\equiv_2 n-1$, and $p\leq \min\{n-1,2k+3\}$. We denote this family of classes as follows for $0\leq p\leq 2k+3$:
\addtocounter{Hproof}{1}
\begin{center}
\begin{tabular}{ccccccccc}
 &\quad& class &\quad& \textnormal{format} &\quad& \textnormal{parameters}\\
\newref{\theHproof}$(p)$\label{m=2k+4}  &\quad& $\mathbf{H}(p,2k)$  &\quad& $(2k+4,n)$  &\quad& $n\geq\max\{p+1,2k\}$, $p\equiv_2 n-1$ 
\end{tabular}
\end{center}

To realize these families, we will use three different results from \Cref{linkH}: one to realize the family \ref{m=2k+4}$(0)$, one to realize the family \ref{m=2k+4}$(1)$, and one to realize the families \ref{m=2k+4}$(p)$ for $2\leq p\leq n-1$.

Let $I$ denote an ideal of class $\mathbf{H}(2k,0)$ and format $(s,2k+1)$ with $s\geq 2k+3$ even, listed in \ref{n=2k+1}$(0)$. Since $k\geq1$, this ensures $2\leq 2k\leq s-3$ and therefore by \Cref{linkH}(v), $I$ is linked to an ideal of class $\mathbf{H}(0,2k)$ and format $(2k+4,s-3)$, realizing the family \ref{m=2k+4}$(0)$. 
By \Cref{linkH}(iv), $I$ is also linked to an ideal of class $\mathbf{H}(1,2k)$ and format $(2k+4,s-2)$, realizing the family \ref{m=2k+4}$(1)$.

Now, we realize the families \ref{m=2k+4}$(p)$ for $2\leq p\leq n-1$. Recall that the ideal of class $\mathbf{H}(2,2k)$ and format $(2k+4,3)$ in family \ref{m=2k+4}$(2)$ has been realized in the base case, so we realize this family for $n\geq 5$ odd. Let $J$ denote an ideal of class $\mathbf{H}(2k,i)$ and format $(s,2k+1)$ for some fixed $i\in\{0,1,2,3,\dots,2k+1\}$, listed in \ref{n=2k+1}$(i)$. By \Cref{linkH}(ii), $J$ is linked to an ideal $J'$ of class $\mathbf{H}(i+2,2k)$ and format $(2k+4,s-1)$. It is straightforward to verify that for all $i\in\{0,1,2,3,\dots,2k+1\}$, the parameters on $s$ for ideal $J$ translate to the corresponding parameters on $n$ for $J'$ to realize the classes \ref{m=2k+4}$(p)$ for $2\leq p\leq n-1$. This completes the first argument.\\

\noindent\underline{Part 1b: realizability for $m=2k+4$ implies realizability for $n=2k+3$}

Assume that we can realize all permissible boundary classes for formats $(2k+4,t)$, listed in \ref{m=2k+4}$(p)$ above. By \Cref{rmk:Hoptions}, the permissible boundary classes for formats $(m,2k+3)$ are $\mathbf{H}(2k+2,q)$ with $m\geq 2k+3$, $q\equiv_2 m-4$ and $q\leq \min\{m-4,2k+3\}$. We denote this family of classes as follows for $0\leq q\leq 2k+3$:
\addtocounter{Hproof}{1}
\begin{center}
\begin{tabular}{ccccccccc}
 &\quad& class &\quad& \textnormal{format} &\quad& \textnormal{parameters}\\
\newref{\theHproof}$(q)$\label{n=2k+3}  &\quad& $\mathbf{H}(2k+2,q)$  &\quad& $(m,2k+3)$ &\quad& $m\geq\max\{q+4,2k+3\}$, $q\equiv_2 m-4$ 
\end{tabular}
\end{center}

Let $I$ denote an ideal of class $\mathbf{H}(i,2k)$ and format $(2k+4,t)$ for some fixed $i\in\{0,1,2,\dots,2k+3\}$, listed in \ref{m=2k+4}$(i)$. \Cref{linkH}(ii), $I$ is linked to an ideal $I'$ of class $\mathbf{H}(2k+2,i)$ and format $(t+3,2k+3)$. It is straightforward to verify that for all $i\in\{0,1,2,\dots,2k+3\}$, the parameters on $t$ for ideal $I$ translate to the corresponding parameters on $m$ for $I'$ to realize the families of classes listed in \ref{n=2k+3}$(q)$ for $0\leq q\leq 2k+3$. This completes the second argument.\\

\noindent\underline{Part 2a: realizability for $m=2k+3$ implies realizability for $n=2k+2$}

Assume that we can realize all permissible boundary classes for formats $(2k+3,t)$. That is, by \Cref{rmk:Hoptions}, assume we can realize classes $\mathbf{H}(p,2k-1)$ for formats $(2k+3,t)$ when $p\equiv_2 t-1$, $p\leq \min\{t-1,2k+2\}$, and $t\geq 2k-1$. We denote this family of classes as follows for $0\leq p\leq 2k+2$:
\addtocounter{Hproof}{1}
\begin{center}
\begin{tabular}{ccccccccc}
 &\quad& class &\quad& \textnormal{format} &\quad& \textnormal{parameters}\\
\newref{\theHproof}$(p)$\label{m=2k+3}  &\quad& $\mathbf{H}(p,2k-1)$  &\quad& $(2k+3,n)$ &\quad& $n\geq\max\{p+1,2k-1\}$, $p\equiv_2 n-1$  
\end{tabular}
\end{center}

By \Cref{rmk:Hoptions}, the permissible boundary classes for formats $(m,2k+2)$ are $\mathbf{H}(2k+1,q)$ with $m\geq 2k+2$, $q\equiv_2 m-4$ and $q\leq \min\{m-4,2k+2\}$. We denote this family of classes as follows for $0\leq q\leq 2k+2$:
\addtocounter{Hproof}{1}
\begin{center}
\begin{tabular}{ccccccccc}
 &\quad& class &\quad& \textnormal{format} &\quad& \textnormal{parameters}\\
\newref{\theHproof}$(q)$\label{n=2k+2}  &\quad& $\mathbf{H}(2k+1,q)$  &\quad& $(m,2k+2)$  &\quad& $m\geq\max\{q+4,2k+2\}$, $q\equiv_2 m-4$ 
\end{tabular}
\end{center}

Let $I$ denote an ideal of class $\mathbf{H}(i,2k-1)$ and format $(2k+3,t)$ for some fixed $i\in\{0,1,2,\dots,2k+2\}$, listed in \ref{m=2k+3}$(i)$. By \Cref{linkH}(ii), $I$ is linked to an ideal $I'$ of class $\mathbf{H}(2k+1,i)$ and format $(t+3,2k+2)$. It is straightforward to verify that for all $i\in\{0,1,2,\dots,2k+2\}$, the restrictions on $t$ for ideal $I$ translate to the corresponding restriction on $m$ for $I'$ to realize the families of classes listed in \ref{n=2k+2}$(q)$. This completes the third argument.\\

\noindent\underline{Part 2b: realizability for $n=2k+2$ implies realizability for $m=2k+5$}

Assume that we can realize all permissible boundary classes for formats $(s,2k+2)$, listed in \ref{n=2k+2}$(q)$ above. By \Cref{rmk:Hoptions}, the permissible boundary classes for formats $(2k+5,n)$ are $\mathbf{H}(p,2k+1)$ with $n\geq 2k+1$, $p\equiv_2 n-1$ and $p\leq \min\{n-1,2k+4\}$. We denote this family of classes as follows for $0\leq p\leq 2k+4$:
\addtocounter{Hproof}{1}
\begin{center}
\begin{tabular}{ccccccccc}
 &\quad& class &\quad& \textnormal{format} &\quad& \textnormal{parameters}\\
\newref{\theHproof}$(p)$\label{m=2k+5}  &\quad& $\mathbf{H}(p,2k+1)$  &\quad& $(2k+5,n)$  &\quad& $n\geq\max\{p+1,2k+1\}$, $p\equiv_2 n-1$ 
\end{tabular}
\end{center}

To realize these families, we will use three different results from \Cref{linkH}: one to realize the family \ref{m=2k+5}$(0)$, one to realize the family \ref{m=2k+5}$(1)$, and one to realize the families \ref{m=2k+5}$(p)$ for $2\leq p\leq n-1$.

Let $I$ denote an ideal of class $\mathbf{H}(2k+1,0)$ and format $(s,2k+2)$ with $s\geq \max\{6,2k+4\}$ even, listed in \ref{n=2k+2}$(0)$. Since $k\geq1$, this ensures $2\leq 2k+1\leq m-3$ and therefore by \Cref{linkH}(v), $I$ is linked to an ideal $I'$ of class $\mathbf{H}(0,2k+1)$ and format $(2k+5,m-3)$, realizing the family \ref{m=2k+5}$(0)$. By \Cref{linkH}(iv), $I$ is also linked to an ideal of class $\mathbf{H}(1,2k+1)$ and format $(2k+3,m-2)$, realizing the family \ref{m=2k+5}$(1)$.

Let $J$ denote an ideal of class $\mathbf{H}(2k+1,i)$ and format $(s,2k+2)$ for some fixed $i\in\{0,1,2,3,\dots,2k+2\}$, listed in \ref{n=2k+2}$(i)$. By \Cref{linkH}(ii), $J$ is linked to an ideal $J'$ of class $\mathbf{H}(i+2,2k+1)$ and format $(2k+5,s-1)$. It is straightforward to verify that for all $i\in\{0,1,2,3,\dots,2k+2\}$, the restrictions on $s$ for ideal $J$ translate to the corresponding restriction on $n$ for $J'$ to realize the classes \ref{m=2k+5}$(p)$ for $2\leq p\leq 2k+4$. This completes the fourth argument.
\end{proof}

In the cases $m=4$ or $n=2$, the permissible boundary classes differ than those listed in the statement of \Cref{thm:realizeH}, but we can also realize all permissible boundary classes.

\begin{proposition}\label{prop:Hm4n2}
For formats $(4,n)$ and $(m,2)$, one can realize all permissible boundary classes. 
\end{proposition}

\begin{proof}
In the case $m=4$, the permissible boundary classes are 
\begin{align*}
\mathbf{H}(p,0), \text{ with } p\leq\min\{3,n-1\}  \text{ and } p\equiv_2 n-1.
\end{align*}
However, by \cite[3.4.2]{Avramov12} the only permissible $\mathbf{H}$ classes when $m=4$ are $\mathbf{H}(3,2)$ when $n=2$, which is a hypersurface section, and $\mathbf{H}(3,0)$ when $n\geq 3$. Therefore, the only permissible boundary classes are 
\begin{align*}
\mathbf{H}(3,0), \text{ with } n\geq 4 \text{ even},
\end{align*}
which are all realized in \Cref{lem:RealizeACI}.

In the case $n=2$, the permissible boundary classes are 
\begin{align*}
&\mathbf{H}(1,q), \text{ with } q\leq\min\{m-4,2\}  \text{ and } q\equiv_2 m-4
\end{align*}
However, by \cite[Theorem 4.4]{Brown84} the only permissible $\mathbf{H}$ classes when $n=2$ are $\mathbf{H}(3,2)$ when $m=4$, which is a hypersurface section, and $\mathbf{H}(1,2)$ when $m\geq 5$. Therefore, the only permissible boundary classes are 
\begin{align*}
\mathbf{H}(1,2), \text{ with } m\geq 6 \text{ even},
\end{align*}
which are all realized in \Cref{lem:RealizeType2}. Therefore, one can realize all permissible boundary classes when $m=4$ and $n=2$.
\end{proof}

\section*{Acknowledgements}
The author would like to thank her advisor Lars W. Christensen for his guidance and support on this project as well as the anonymous referee for a very close reading and numerous helpful comments that greatly improved the presentation.

\appendix
\section{Multiplication in Tor Algebras of Linked Ideals---Proofs}\label{appendix}

\begin{setup}\label{setup:DefLink}
This notation, and the notation throughout this appendix, largely follows the notation found in \cite{AKM88}. Let $(R,\mathfrak{m},\Bbbk)$ be a regular local ring and $I\subseteq\mathfrak{m}^2$ a perfect ideal of grade 3. Consider a minimal free resolution $F_{\bullet}$ of $R/I$ as 
\begin{equation*}
\begin{tikzcd} 
0 \ar[r] & F_{3} \ar[r,"d_3"] & F_{2} \ar[r,"d_2"] & F_{1} \ar[r,"d_1"] & R \ar[r] & 0.
\end{tikzcd}
\end{equation*}
Set $m = \rank_R(F_1)$, set $n = \rank_R(F_3)$ and denote the tuple $(m,n)$ as the \emph{format} of $I$. Basis elements of $F_1$ will be denoted by $e$, basis elements of $F_2$ will be denoted by $f$, and basis elements of $F_3$ will be denoted by $g$. 

Let $\mathbf{x} = x_1,x_2,x_3$ be a regular sequence in $I$ that does not generate $I$. Consider the linked ideal $J = (\mathbf{x}):I$ and the Koszul resolution $(K_\bullet,\delta)$ of $R/(\mathbf{x})$. Let $\mathbf{\phi}:K_\bullet \rightarrow F_\bullet$ be the morphism of DG-algebras that extends the natural map $R/(\mathbf{x}) \rightarrow R/I$ and consider the commutative diagram:
\begin{equation}\label{a:commdiag}
\begin{tikzcd}
0 \ar[r]
& K_3 \ar[r, "\delta_3"] \ar[d, "\phi_3"] 
& K_2 \ar[r, "\delta_2"] \ar[d, "\phi_2"]  
& K_1 \ar[r, "\delta_1"] \ar[d, "\phi_1"]
& R \ar[r] \ar[d, equals, "\phi_0"]
& R/(\mathbf{x}) \ar[r] \ar[d]
& 0\\
0 \ar[r]
& F_3 \ar[r, "d_3"]
& F_2 \ar[r, "d_2"] 
& F_1 \ar[r, "d_1"]
& R \ar[r] 
& R/I \ar[r]
& 0
\end{tikzcd}    
\end{equation}
Fix bases 
\[ 
u_1,u_2,u_3, \quad v_{1,2},v_{2,3},v_{1,3}, \quad\textnormal{and}\quad w
\]
for $K_1$, $K_2$, and $K_3$, respectively, such that $K_\bullet$ has a DG-algebra structure given by
\begin{align*}
u_1u_2 = v_{1,2}, \quad u_1u_3&=v_{1,3}, \quad u_2u_3=v_{2,3}\\
u_1u_2u_3 &= w
\end{align*}
where the only nontrivial products that are not listed are given by the rules of graded-commutativity. For any $R$-module $M$, let $M^*$ denote the linear dual $\Hom_R(M,R)$. We consider $K^*$ to have dual bases 
\[ 
u_1^*,u_2^*,u_3^* \quad\quad v_{1,2}^*,v_{2,3}^*,v_{1,3}^* \quad\quad w^*
\]
for $K_1^*$, $K_2^*$, and $K_3^*$, respectively. The product on $K_\bullet$ induces perfect pairings $K_i \otimes K_{3-i} \rightarrow K_3$ for $0\leq i\leq 3$. 
The perfect pairings induce isomorphisms $\chi_i: K_i^*\rightarrow K_{3-i}$ via
\begin{align*}
\chi_0(1^*)&=w & \chi_1(u_1^*)&=v_{2,3} & \chi_2(v_{2,3}^*)&=u_1 & \chi_3(w^*)&=1\\
& & \chi_1(u_2^*)&=v_{1,3} & \chi_2(v_{1,3}^*)&=u_2 & &\\
& & \chi_1(u_3^*)&=v_{1,2} & \chi_2(v_{1,2}^*)&=u_3 & &
\end{align*}
For $0\leq i \leq 3$, define $\psi_i = \chi_i \circ \phi_i^*: F_i^* \rightarrow K_i^*\rightarrow K_{3-i}$. 
\end{setup}

\noindent We now recall a result from \cite[Proposition 1.6]{AKM88} on a minimal free resolution of $R/J$:

\begin{chunk}
Adopt the notation and hypotheses from \Cref{setup:DefLink}. The complex $\textnormal{cone}(\psi)$ gives a resolution of $R/J$. After splitting off a contractible summand defined by the isomorphism $\psi_0$, we denote the resulting complex $(D_\bullet,\partial)$. This complex is given by
\begin{equation*}
\begin{tikzcd}[row sep = 0.000001em]
& 
& F_2^*
& F_3^*
\\
0 \ar[r]
& F_1^* \ar[r, "\partial_3"] 
& \oplus \ar[r, "\partial_2"] 
& \oplus \ar[r, "\partial_1"]
& K_0 \ar[r]
& 0
\\
& 
& K_2
& K_1
\end{tikzcd}
\end{equation*}
where the generators of $D_1=F_3^*\oplus K_1$ are 
\[
\begin{bmatrix}
g_i^* \\ 0
\end{bmatrix} \textnormal{ for } 1\leq i\leq n,\enspace 
\begin{bmatrix}
0 \\ u_1
\end{bmatrix}, \enspace 
\begin{bmatrix}
0 \\ u_2
\end{bmatrix},\enspace\textnormal{and}\enspace
\begin{bmatrix}
0 \\ u_3
\end{bmatrix},
\]
the generators of $D_2=F_2^*\oplus K_2$ are
\[
\begin{bmatrix}
f_i^* \\ 0
\end{bmatrix} \textnormal{ for } 1\leq i\leq m+n-1,\enspace 
\begin{bmatrix}
0 \\ v_{2,3}
\end{bmatrix},\enspace
\begin{bmatrix}
0 \\ v_{1,3}
\end{bmatrix}, \enspace\textnormal{and}\enspace
\begin{bmatrix}
0 \\ v_{1,2}
\end{bmatrix},
\]
and the generators of $D_3=F_1^*$ are
\[
e_i^* \textnormal{ for } 1\leq i\leq m.
\]
By construction, the differentials are given by the matrices 
\begin{equation*}
\partial_3 = \begin{bmatrix}
d_2^*\\
\psi_1
\end{bmatrix}, \quad
\partial_2 = \begin{bmatrix}
d_3^* & 0\\
-\psi_2 & -\delta_2
\end{bmatrix}, \enspace \text{ and } \enspace
\partial_1 = \begin{bmatrix}
\psi_3 & \delta_1
\end{bmatrix}.
\end{equation*}
Moreover, if $F_\bullet$ is a minimal resolution of $R/I$, then 
\begin{equation}\label{a:ranks}
\rank(\psi_i\otimes \Bbbk) = \rank(\phi_i\otimes \Bbbk) = \rank(\partial_{3-i}\otimes \Bbbk) \text{ for all } 1\leq i\leq 3.  
\end{equation}
We now look at the different options for the format of $J$ depending on $\rank(\phi_i\otimes \Bbbk)$ for all $1\leq i\leq 3$.
\end{chunk}

\begin{lemma}\label{LinkOption}
Adopt the notation and hypotheses from \Cref{setup:DefLink}. Let $I$ be an ideal with format $(m,n)$ and let $J=I:(x_1,x_2,x_3)$ be the ideal obtained from $I$ via linking over a regular sequence $x_1,x_2,x_3$. Then $J$ is an ideal with format given from the following list: $(n+3,m)$, $(n+3,m-1)$, $(n+3,m-2)$, $(n+2,m-2)$, $(n+3,m-3)$, $(n+2,m-3)$, $(n+1,m-3)$, $(n,m-3)$.
\end{lemma}

\begin{proof} 
The Tor algebra $A_\bullet=\Tor_\bullet^R(R/I,\Bbbk)$ has the structure of a graded algebra induced from the differential graded algebra structure on $F_\bullet$. Within our assumptions, $A_\bullet$ has a multiplicative structures given by one of the classes listed in \Cref{classification}. Of these structures, only class $\mathbf{C}(3)$ has nontrivial multiplication of three elements from $A_1$. Since $\phi$ is an algebra homomorphism, for $e,e',e''\in F_1$ and $u,u',u''\in K_1$ such that $\phi_1(u)=e$, $\phi_1(u')=e'$, and $\phi_1(u'')=e''$, we have that $\phi_3(u\cdot u'\cdot u'') = \phi_1(u) \cdot \phi_1(u') \cdot \phi_1(u'') = e\cdot e'\cdot e''$ and thus $\rank(\phi_3\otimes \Bbbk)=0$ for all classes except $\mathbf{C}(3)$. 
Since $I$ is not a complete intersection ideal, $I$ will not be of class $\mathbf{C}(3)$. Noticing that $\rank(\phi_3\otimes \Bbbk)=0$, we obtain from the equalities in \eqref{a:ranks} that the format of $J$ is given by 
\[
(n+3-\rank(\phi_2\otimes \Bbbk), m-\rank(\phi_1\otimes \Bbbk)).
\]

Depending on the choice of regular sequence, we have $\rank(\phi_1\otimes \Bbbk)\in\{0,1,2,3\}$. We consider each of these cases and use the fact that $\phi$ is an algebra homomorphism to derive information about $\rank(\phi_2\otimes \Bbbk)$.

If $\rank(\phi_1\otimes \Bbbk)=0$, then $\rank(\phi_2\otimes \Bbbk)=0$. This accounts for the format $(n+3,m)$. 

If $\rank(\phi_1\otimes \Bbbk)=1$, then $\rank(\phi_2\otimes \Bbbk)=0$ by skew commutativity. This accounts for format $(n+3,m-1)$.

If $\rank(\phi_1\otimes \Bbbk)=2$, then by skew-commutativity , we have $\rank(\phi_2\otimes \Bbbk)=0$ or $\rank(\phi_2\otimes \Bbbk)=1$ depending on if the product $e_1e_2$ has coefficients in $\mathfrak{m}$. This accounts for the formats $(n+3,m-2)$ and $(n+2,m-2)$. 

If $\rank(\phi_1\otimes \Bbbk)=3$, then by skew-commutativity, we consider the number of pairwise nontrivial products of basis elements, of which there can be $0$, $1$, $2$, or $3$. This implies $\rank(\phi_2\otimes \Bbbk)\in\{0,1,2,3\}$, accounting for the final four formats listed in the statement.
\end{proof}

Notice that each format arises from distinct values of $\rank(\phi_1\otimes \Bbbk)$ and $\rank(\phi_2\otimes \Bbbk)$, described in \Cref{tab:4} below. In this work, we focus on only the first five possibilities. The remaining possibilities are discussed in works such as \cite{CVW20Linkage}.

\begin{table}[h]
    \centering
    \begin{tabular}{|c|c|c|c|c|}
    \hline
    & $\rank(\phi_2\otimes\Bbbk)=0$ &  $\rank(\phi_2\otimes\Bbbk)=1$ & $\rank(\phi_2\otimes\Bbbk)=2$ & $\rank(\phi_2\otimes\Bbbk)=3$ \\
    \hline
    $\rank(\phi_1\otimes\Bbbk)=0$ & $(n+3,m)$ & -- & -- & --\\
    \hline
    $\rank(\phi_1\otimes\Bbbk)=1$ & $(n+3,m-1)$ & -- & -- & --\\
    \hline
    $\rank(\phi_1\otimes\Bbbk)=2$ & $(n+3,m-2)$ & $(n+2,m-2)$ & -- & --\\
    \hline
    $\rank(\phi_1\otimes\Bbbk)=3$ & $(n+3,m-3)$ & $(n+2,m-3)$ & $(n+1,m-3)$ & $(n,m-3)$ \\
    \hline
    \end{tabular}
    \caption{Possible Formats of a Linked Ideal}
    \label{tab:4}
\end{table}

Now, we state results from the literature \cite[Lemma 1.9, Lemma 1.10, Theorem 1.13]{AKM88} that detail how the product structure on $\Tor_\bullet^R(R/I,\Bbbk)$ impacts the product structure on $\Tor_\bullet^R(R/J,\Bbbk)$. 

\begin{chunk}
Adopt the notation and hypotheses from \Cref{setup:DefLink}. Let $\langle \cdot,\cdot \rangle:M\times M^*\rightarrow R$ denote the evaluation morphism for any $R$-module $M$. By \cite[Lemma 1.9]{AKM88}, there exists a map 
\[
X:\bigwedge^3 F_1 \otimes \bigwedge^2 F_3^* \rightarrow F_2^*
\]
with 
\[
d_3^* X(e\wedge e' \wedge e'' \otimes g^* \wedge g'^*) = \langle ee'e'', g^*\rangle g'^* - \langle ee'e'', g'^*\rangle g^*
\]
for all $e,e',e''\in F_1$ and all $g^*,g'^*\in F_3^*$. Moreover, by \cite[Lemma 1.10]{AKM88} there exists a map 
\[
Y:\bigwedge^3 F_1 \otimes F_3^* \otimes F_2^*  \rightarrow F_1^*
\]
with $\langle f', d_2^*\circ Y(e \wedge e' \wedge e'' \otimes g^* \otimes f^*)\rangle$ equal to 
\begin{align*}
\langle e e' e'', g^* \rangle \langle f', f^* \rangle &- \langle f', X(e \wedge e' \wedge e'' \otimes g^* \otimes d_3^*f^*) \rangle\\
- \langle e' e'', f^* \rangle \langle ef', g^*\rangle &+ \langle e e'', f^* \rangle \langle e'f', g^*\rangle - \langle e e', f^* \rangle \langle e''f', g^*\rangle
\end{align*}
for all $e,e',e''\in F_1$, $f'\in F_2$, $f^*\in F_2^*$, and $g^*\in F_3^*$.

Let $\Sigma_{rst}$ denote the sum taken over the indices $r$, $s$, and $t$ with $1\leq r<s<t\leq n$ and let $(\phi_1)_{rst}$ denote the minor of the matrix representation of $\phi_1$ taken over rows $r$, $s$, and $t$. Then by \cite[Theorem 1.13]{AKM88} there exist maps $\lambda:\bigwedge^2 F_3^*\rightarrow K_2$ and $\mu:F_3^*\otimes F_2^*\rightarrow R^*$ such that the following multiplication endows $D_\bullet$ with the structure of a differential graded algebra. The products $D_1\times D_1 \rightarrow D_2$ are given by
\begin{enumerate}[label=\alph*.]
    \item $\begin{bmatrix}
    0 \\ u
    \end{bmatrix} \cdot \begin{bmatrix}
    0 \\ u'
    \end{bmatrix} = 
    \begin{bmatrix}
    0 \\ u u'
    \end{bmatrix}$
    \item $\begin{bmatrix}
    0 \\ u
    \end{bmatrix} \cdot \begin{bmatrix}
    g^* \\ 0
    \end{bmatrix} = 
    \begin{bmatrix}
    \sum_{k=1}^{m+n-1} \langle \phi_1(u)f_k, g^* \rangle f_k^* \\ 0
    \end{bmatrix}$
    \item $\begin{bmatrix}
    g^* \\ 0
    \end{bmatrix} \cdot \begin{bmatrix}
    g'^* \\ 0
    \end{bmatrix} = 
    \begin{bmatrix}
    \sum_{rst} (\phi_1)_{rst} X(e_r\wedge e_s \wedge e_t \otimes g^* \wedge g'^*) \\ \lambda(g^* \wedge g'^*)
    \end{bmatrix}$
\end{enumerate}
and the products $D_1\times D_2 \rightarrow D_3$ are given by 
\begin{enumerate}[label=\alph*.]
    \setcounter{enumi}{3}
    \item $\begin{bmatrix}
    0 \\ u
    \end{bmatrix} \cdot \begin{bmatrix}
    0 \\ v
    \end{bmatrix} = \langle uv,w^*\rangle d_1$
    \item $\begin{bmatrix}
    0 \\ u
    \end{bmatrix} \cdot \begin{bmatrix}
    f^* \\ 0
    \end{bmatrix} = 
    \sum_{k=1}^m \langle \phi_1(u)e_k, f^* \rangle e_k^* $
    \item $\begin{bmatrix}
    g^* \\ 0
    \end{bmatrix} \cdot \begin{bmatrix}
    0 \\ v
    \end{bmatrix} = 
    \sum_{k=1}^m \langle \phi_2(v)e_k, g^* \rangle e_k^* $
    \item $\begin{bmatrix}
    g^* \\ 0
    \end{bmatrix} \cdot \begin{bmatrix}
    f^* \\ 0
    \end{bmatrix} = 
    \sum_{rst} (\phi_1)_{rst} Y(e_r\wedge e_s \wedge e_t \otimes g^* \wedge f^*) + d_1^*(\mu(g^*\otimes f^*))$
\end{enumerate}
Apart from the products given by graded-commutativity, the products not listed are zero. Moreover, one may assume that $\im\lambda \subseteq \mathfrak{a}_1\mathfrak{a}_2\mathfrak{a}_3K_2$, where $\mathfrak{a}_i$ is the ideal generated by the entries of the $i^{th}$ column of $\phi_1$.
\end{chunk}

\noindent In the remainder of this appendix, it will be useful to set the following notation.

\begin{notation}\label{notation:JInfo}
Adopt the notation and hypotheses from \Cref{setup:DefLink}. We denote by $C_\bullet$ a minimal free resolution of linked ideal $J$ obtained from $D_\bullet$. We denote by $B_\bullet$ the Tor algebra $\Tor_\bullet^R(R/J,\Bbbk)$. Let $\overline{\cdot}=\cdot \otimes \Bbbk$. We denote by
\[
\EE_i = \overline{\begin{bmatrix}
g_i^* \\ 0
\end{bmatrix}} \textnormal{ for } 1\leq i\leq n,\enspace 
\EE_{n+1} = \overline{\begin{bmatrix}
0 \\ u_1
\end{bmatrix}}, \enspace 
\EE_{n+2} = \overline{\begin{bmatrix}
0 \\ u_2
\end{bmatrix}},\enspace\textnormal{and}\enspace
\EE_{n+3} = \overline{\begin{bmatrix}
0 \\ u_3
\end{bmatrix}},
\]
\[
\FF_i = \overline{\begin{bmatrix}
f_i^* \\ 0
\end{bmatrix}} \textnormal{ for } 1\leq i\leq m+n-1,\enspace 
\FF_{m+n} = \overline{\begin{bmatrix}
0 \\ v_{2,3}
\end{bmatrix}},\enspace
\FF_{m+n+1} = \overline{\begin{bmatrix}
0 \\ v_{1,3}
\end{bmatrix}}, \enspace\textnormal{and}\enspace
\FF_{m+n+2} = \overline{\begin{bmatrix}
0 \\ v_{1,2}
\end{bmatrix}},
\]
\[
\GG_i = \overline{e_i^*} \textnormal{ for } 1\leq i\leq m,
\]
the basis elements of $B_1$, $B_2$, and $B_3$, respectively, induced by the basis on $C_\bullet$. Note that it is not guaranteed that all of these elements will be present once we consider the \emph{minimal} resolution. This will be addressed on a case by case basis in \Cref{lem:up6}-\ref{lem:over3}.
\end{notation}

\begin{setup}\label{setup:LinkClass}
Adopt the notation and hypotheses as in \Cref{notation:JInfo}. If $\rank(\phi_1\otimes \Bbbk)=t$ for some $t\geq 1$, then we will assume we have $x_1,\dots x_t$ as minimal generators of $I$ that correspond to basis elements $e_1,\dots,e_t$ of $F_1$, i.e., $d_1(e_i)=x_i$ for all $1\leq i\leq t$. By \cite[Lemma 8.2]{BE74}, we can still assume $x_1,x_2,x_3$ forms a regular sequence in a way that does not impact the multiplicative structure of $\Tor_\bullet^R(R/I,\Bbbk)$, see \cite[A.5]{CVW20Linkage}.

By \eqref{a:commdiag}, since $d_1\circ \phi_1=\delta_1$, we also have that $\phi_1(u_i)=e_i$ for all $1\leq i\leq t$. Moreover, if $\rank(\phi_2\otimes \Bbbk)=1$, we will assume that $e_1\cdot e_2=f_1$. Since $\phi$ is a DG-algebra morphism, we also have that $\phi_2(v_{1,2})=\phi_2(u_1\cdot u_2)=\phi_1(u_1)\cdot \phi_1(u_2)=e_1\cdot e_2=f_1$.
\end{setup}

\noindent The following result can be deduced from the proof of \cite[Lemma 2.8]{AKM88}, but we include it here for completeness.

\begin{lemma}\label{lem:up6}
Adopt the notation and hypotheses as in \Cref{setup:LinkClass}.
Consider an ideal $J$, linked to $I$, such that $\rank(\phi_1\otimes k)=0$. Then $B_\bullet$ has only the nontrivial products $\EE_{n+1}\cdot \EE_{n+2} = \FF_{m+n+2}$, $\EE_{n+1}\cdot \EE_{n+3} = \FF_{m+n+1}$, and $\EE_{n+2}\cdot \EE_{n+3} = \FF_{m+n+2}$. In particular, $J$ is of class $\mathbf{T}$. 
\end{lemma}

\begin{proof}
Assume $\rank(\phi_1\otimes k)=0$. Then $\phi_1(u_1), \phi_1(u_2), \phi_1(u_3) \in\mathfrak{m}F_1$ and by the proof of \Cref{LinkOption}, we have that $\rank(\phi_2\otimes k)=0$.  This implies products b, c, e, and g are zero in $B_\bullet$. Moreover, $\phi_2(v_{1,2}), \phi_2(v_{1,3}), \phi_2(v_{2,3}) \in\mathfrak{m}F_2$, so product f is zero in $B_\bullet$. Finally, $d_1$ takes image in $\mathfrak{m}$, so product d is zero in $B_\bullet$. Therefore we only have nontrivial products coming from product a, which are $\EE_{n+1}\cdot \EE_{n+2} = \FF_{m+n+2}$, $\EE_{n+1}\cdot \EE_{n+3} = \FF_{m+n+1}$, and $\EE_{n+2}\cdot \EE_{n+3} = \FF_{m+n+2}$. Hence $J$ is of class $\mathbf{T}$.
\end{proof}

\begin{lemma}\label{lem:up4}
Adopt the notation and hypotheses as in \Cref{setup:LinkClass}.
Consider an ideal $J$, linked to $I$, such that $\rank(\phi_1\otimes k)=1$. Then $B_\bullet$ has guaranteed nontrivial products $\EE_{n+1}\cdot \EE_{n+2} = \FF_{m+n+2}$ and $\EE_{n+1}\cdot \EE_{n+3} = \FF_{m+n+1}$, as well as possible nontrivial products 
\begin{align*}
    \EE_{n+1}\cdot \EE_i &= \sum_{k=1}^{m+n-1} \overline{\langle e_1f_k, g_i^*\rangle} \FF_k \textnormal{ for } 1\leq i\leq n,\textnormal{ and}\\ 
    \EE_{n+1}\cdot \FF_i &= \sum_{k=2}^{m} \overline{\langle e_1e_k, f_i^*\rangle} \GG_k \textnormal{ for } 1\leq i\leq m+n-1.
    \end{align*}
\end{lemma}

\begin{proof}
Since $\rank(\phi_1\otimes k)=1$, we have that $\phi_1(u_1) = e_1$ and $\phi_1(u_2), \phi_1(u_3) \in\mathfrak{m}F_1$. To obtain a minimal resolution $C_\bullet$ from $D_\bullet$, we must split off basis elements from $D_3$ and $D_2$. Namely, we split off $G_1$, as this corresponds with the minimal generator $e_1$, and we split off $F_{m+n}$, as this corresponds to $v_{2,3}$ and $\psi_1(e_1^*)=v_{2,3}$. Thus we only consider multiplication between the basis elements $\EE_1,\dots,\EE_{n+3}$, $\FF_1,\dots,\FF_{m+n-1},\FF_{m+n+1},\FF_{m+n+2}$, and $\GG_2,\dots,\GG_m$ in $B_\bullet$. Notice that the absence of $\GG_1$ results in summation index starting at $k=2$ for product e. We consider each product type individually. 
\begin{enumerate}[label=\alph*.]
    \item The product $\EE_{n+2}\cdot \EE_{n+3} = \FF_{m+n}$ has split off and is not in $B_\bullet$. The products $\EE_{n+1}\cdot \EE_{n+2} = \FF_{m+n+2}$ and $\EE_{n+1}\cdot \EE_{n+3} = \FF_{m+n+1}$ are in $B_\bullet$. 
    \item Any products of this type involving $\EE_{n+2}$ and $\EE_{n+3}$ will depend on $\phi_1(u_2)$ and $\phi_1(u_3)$, respectively, which are in $\mathfrak{m}F_1$ and are therefore zero in $B_\bullet$. Therefore we have possible nontrivial products 
    \begin{align*}
        \EE_{n+1}\cdot \EE_i = \sum_{k=1}^{m+n-1} \overline{\langle \phi_1(u_1)f_k, g_i^*\rangle} \FF_k = \sum_{k=1}^{m+n-1} \overline{\langle e_1f_k, g_i^*\rangle} \FF_k \textnormal{ for } 1\leq i\leq n
    \end{align*}
    \item Products of this form involve a sum of terms whose coefficients are $3\times 3$ minors $(\phi_1)_{rst}$ for all $1\leq r<s<t\leq n$. Since $\rank(\phi_1\otimes k)=1$, all minors of this form are in $\mathfrak{m}$. Likewise, since we can take $\im\lambda \subseteq \mathfrak{a}_1\mathfrak{a}_2\mathfrak{a}_3K_2$, where $\mathfrak{a}_j$ is the ideal generated by the entries of the $j^{th}$ column of the matrix representation of $\phi_1$, we have $\im\lambda\subseteq\mathfrak{m}C_\bullet$. Hence there are no products of this form in $B_\bullet$.
    \item The products $\EE_{n+2}\cdot \FF_{m+n+1} = -d_1$ and $\EE_{n+3}\cdot \FF_{m+n+2} = d_1$ are in $\mathfrak{m}F_1^*$ and are therefore zero in $B_\bullet$.
    \item Any products of this type involving $\EE_{n+2}$ or $\EE_{n+3}$ will depend on $\phi_1(u_2)$ or $\phi_1(u_3)$, respectively, which are in $\mathfrak{m}F_1$ and are therefore zero in $B_\bullet$. Therefore we have possible nontrivial products 
    \begin{align*}
        \EE_{n+1}\cdot \FF_i = \sum_{k=2}^{m} \overline{\langle \phi_1(u_1)e_k, f_i^*\rangle} \GG_k = \sum_{k=2}^{m} \overline{\langle e_1e_k, f_i^*\rangle} \GG_k \textnormal{ for } 1\leq i\leq m+n-1
    \end{align*}
    \item The products $\EE_i\cdot \FF_{m+n+1}$ and $\EE_i\cdot \FF_{m+n+2}$ for $1\leq i\leq n$ will depend on $\phi_2(v_{1,3}) = \phi_1(u_1)\cdot \phi_1(u_3)$ and $\phi_2(v_{1,2}) = \phi_1(u_1)\cdot\phi_1(u_2)$, respectively, which are in $\mathfrak{m}F_2$ and are therefore zero in $B_\bullet$.
    \item Products of this form involve a sum of terms whose coefficients are $3\times 3$ minors $(\phi_1)_{rst}$ for all $1\leq r<s<t\leq n$. Since $\rank(\phi_1\otimes k)=1$, all minors of this form are in $\mathfrak{m}$. Likewise, $d_1^*(\mu(g^*\otimes f^*))\in\mathfrak{m}F_1^*$, and all products of this form are zero in $B_\bullet$. \qedhere
\end{enumerate}
\end{proof}

\begin{lemma}\label{lem:up2}
Adopt the notation and hypotheses as in \Cref{setup:LinkClass}.
Consider an ideal $J$, linked to $I$, such that $\rank(\phi_1\otimes k)=2$ and $\rank(\phi_2\otimes k)=0$. Then $B_\bullet$ has guaranteed nontrivial product $\EE_{n+1}\cdot \EE_{n+2} = \FF_{m+n+2}$, as well as possible nontrivial products 
\begin{align*}
    \EE_{n+1}\cdot \EE_i &= \sum_{k=1}^{m+n-1} \overline{\langle e_1f_k, g_i^*\rangle} \FF_k \textnormal{ for } 1\leq i\leq n,\\
    \EE_{n+2}\cdot \EE_i &= \sum_{k=1}^{m+n-1} \overline{\langle e_2f_k, g_i^*\rangle} \FF_k \textnormal{ for } 1\leq i\leq n,\\
    \EE_{n+1}\cdot \FF_i &= \sum_{k=3}^{m} \overline{\langle e_1e_k, f_i^*\rangle} \GG_k \textnormal{ for } 1\leq i\leq m+n-1, \textnormal{ and}\\
    \EE_{n+2}\cdot \FF_i &= \sum_{k=3}^{m} \overline{\langle e_2e_k, f_i^*\rangle} \GG_k \textnormal{ for } 1\leq i\leq m+n-1.
\end{align*}
\end{lemma}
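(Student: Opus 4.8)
The plan is to follow the template of the proofs of \Cref{lem:up6} and \Cref{lem:up4}, reading the multiplication on $B_\bullet$ directly off the DG-algebra products a--g on $\Psi_\bullet$ after passing to the minimal resolution $\Upsilon_\bullet$ and reducing modulo $\mathfrak{m}$.

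First I would record what the hypotheses give. By \Cref{setup:LinkClass}, $\rank(\phi_1\otimes k)=2$ yields $\phi_1(u_1)=e_1$ and $\phi_1(u_2)=e_2$ with $d_1(e_i)=x_i$, and $\phi_1(u_3)\in\mathfrak{m}F_1$; while $\rank(\phi_2\otimes k)=0$ yields $\phi_2(v_{1,2})=\phi_2(u_1u_2)=e_1e_2\in\mathfrak{m}F_2$ and likewise $\phi_2(v_{1,3}),\phi_2(v_{2,3})\in\mathfrak{m}F_2$; and $\rank(\phi_3\otimes k)=0$ as in \Cref{LinkOption}. Using $\psi_i=\chi_i\circ\phi_i^*$, a direct computation then gives $\psi_1(e_1^*)\equiv v_{2,3}$ and $\psi_1(e_2^*)\equiv v_{1,3}$ modulo $\mathfrak{m}K_2$, with $\psi_1(e_i^*)\in\mathfrak{m}K_2$ for $i\ge 3$, while $\psi_2\equiv 0$ modulo $\mathfrak{m}$. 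Hence the only non-minimality of $(\Psi_\bullet,\partial)$ sits in two contractible summands, one generated by $\GG_1=e_1^*$ and $\FF_{m+n}=v_{2,3}$, the other by $\GG_2=e_2^*$ and $\FF_{m+n+1}=v_{1,3}$. Splitting these off produces $\Upsilon_\bullet$ in which $\Upsilon_3$ is free on $\GG_3,\dots,\GG_m$, $\Upsilon_2$ is free on $\FF_1,\dots,\FF_{m+n-1},\FF_{m+n+2}$, and $\Upsilon_1$ is free on $\EE_1,\dots,\EE_{n+3}$; this is the format $(n+3,m-2)$ of \Cref{LinkOption}(iii).

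Next I would run through the products a--g. In $B_\bullet$ a product vanishes whenever a defining coefficient lies in $\mathfrak{m}$ or the product meets one of the split-off basis elements $\FF_{m+n}$, $\FF_{m+n+1}$, $\GG_1$, $\GG_2$. The first criterion kills every term involving $\phi_1(u_3)$ in products b and e, every term involving some $\phi_2(v_{i,j})$ in product f, and all of products c, d, and g (whose coefficients are the $3\times 3$ minors $(\phi_1)_{rst}$, which lie in $\mathfrak{m}$ since $\rank(\phi_1\otimes k)=2$, together with the image of $\lambda$ in $\mathfrak{m}\Psi_2$, the element $d_1\in\mathfrak{m}\Psi_3$, and $d_1^*\mu\in\mathfrak{m}\Psi_3$); it also shows, for product e with $u=u_1$ or $u=u_2$, that the $k=1,2$ summands drop out, since $\langle e_je_j,f_i^*\rangle=0$ and $\langle e_1e_2,f_i^*\rangle\in\mathfrak{m}$ by the computation above (equivalently, $\GG_1$ and $\GG_2$ are no longer present). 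The second criterion removes $\EE_{n+1}\cdot\EE_{n+3}$ and $\EE_{n+2}\cdot\EE_{n+3}$ in product a. What remains is exactly: $\EE_{n+1}\cdot\EE_{n+2}=\FF_{m+n+2}$ from product a, which is nonzero because $\FF_{m+n+2}$ is a basis element of $\Upsilon_2$; the products $\EE_{n+1}\cdot\EE_i=\sum_{k=1}^{m+n-1}\langle e_1f_k,g_i^*\rangle\FF_k$ and $\EE_{n+2}\cdot\EE_i=\sum_{k=1}^{m+n-1}\langle e_2f_k,g_i^*\rangle\FF_k$ for $1\le i\le n$ from product b; and the products $\EE_{n+1}\cdot\FF_i=\sum_{k=3}^{m}\langle e_1e_k,f_i^*\rangle\GG_k$ and $\EE_{n+2}\cdot\FF_i=\sum_{k=3}^{m}\langle e_2e_k,f_i^*\rangle\GG_k$ for $1\le i\le m+n-1$ from product e. These are precisely the products listed in the statement.

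The main obstacle is the same bookkeeping that appears in \Cref{lem:up6} and \Cref{lem:up4}: one must track carefully which basis elements are split off, check that the induced DG structure on $\Upsilon_\bullet$ is the restriction-and-projection of the one on $\Psi_\bullet$ so that no extra terms survive modulo $\mathfrak{m}$, and verify that truncating the product-e sums to $k\ge 3$ is justified both by $e_j^2=0$ and by $\phi_2(v_{1,2})=e_1e_2\in\mathfrak{m}F_2$. None of these steps is difficult, but each must be carried out case by case as in the earlier lemmas.
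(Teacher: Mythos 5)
Your proposal is correct and follows essentially the same route as the paper's proof: identify $\phi_1(u_1)=e_1$, $\phi_1(u_2)=e_2$, $\phi_1(u_3)\in\mathfrak{m}F_1$, split off the contractible summands pairing $\GG_1,\GG_2$ with $\FF_{m+n},\FF_{m+n+1}$, and then run through the product types a--g, killing terms with coefficients in $\mathfrak{m}$ (including all of c, d, f, g) and truncating the type-e sums to $k\geq 3$. Your extra remark that the $k=1,2$ terms also vanish because $e_j^2=0$ and $\langle e_1e_2,f_i^*\rangle\in\mathfrak{m}$ is a slightly more careful justification of the truncation than the paper's appeal to the absence of $\GG_1,\GG_2$, but the argument is otherwise the same.
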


\begin{proof}
Since $\rank(\phi_1\otimes k)=2$, we have that $\phi_1(u_1) = e_1$ and $\phi_1(u_2) = e_2$, with $\phi_1(u_3) \in\mathfrak{m}F_1$. To obtain a minimal resolution $C_\bullet$ from $D_\bullet$, we must split off basis elements from $D_3$ and $D_2$. Namely, we split off $G_1$ and $G_2$ from $D_3$, as these correspond with $e_1$ and $e_2$, respectively. Since $\psi_1(e_1^*)=v_{2,3}$ and $\psi_1(e_2^*)=v_{1,3}$, we split off $F_{m+n}$ and $F_{m+n+1}$ from $D_2$, as these correspond to $v_{2,3}$ and $v_{1,3}$, respectively. Thus we only consider multiplication between the basis elements $\EE_1,\dots,\EE_{n+3}$, $\FF_1,\dots,\FF_{m+n-1},\FF_{m+n+2}$, and $\GG_3,\dots,\GG_m$ in $B_\bullet$. Notice that the absence of $\GG_1$ and $\GG_2$ results in summation index starting at $k=3$ for product e. We consider each product type individually.
\begin{enumerate}[label=\alph*.]
    \item The products $\EE_{n+2}\cdot \EE_{n+3} = \FF_{m+n}$ and $\EE_{n+1}\cdot \EE_{n+3} = \FF_{m+n+1}$ have split off and are not in $B_\bullet$. The product $\EE_{n+1}\cdot \EE_{n+2} = \FF_{m+n+2}$ is in $B_\bullet$.
    \item Any products of this type involving $\EE_{n+3}$ will depend on $\phi_1(u_3)$, which is in $\mathfrak{m}F_1$ and is therefore zero in $B_\bullet$. Thus we have possible nontrivial products 
    \begin{align*}
        \EE_{n+1}\cdot \EE_i &= \sum_{k=1}^{m+n-1} \overline{\langle e_1f_k, g_i^*\rangle} \FF_k \textnormal{ for } 1\leq i\leq n \textnormal{ and}\\
        \EE_{n+2}\cdot \EE_i &= \sum_{k=1}^{m+n-1} \overline{\langle e_2f_k, g_i^*\rangle} \FF_k \textnormal{ for } 1\leq i\leq n.
    \end{align*}
    \item For the same reasoning as \Cref{lem:up4}, there are no products of this form in $B_\bullet$.
    \item The product $E_{n+3}\cdot F_{m+n+2} = d_1$ is in $\mathfrak{m}F_1^*$ and is therefore zero in $B_\bullet$.
    \item Any products of this type involving $E_{n+3}$ will depend on $\phi_1(u_3)$, which is in $\mathfrak{m}F_1$ and is therefore zero in $B_\bullet$. Thus we have possible nontrivial products
    \begin{align*}
        \EE_{n+1}\cdot \FF_i &= \sum_{k=3}^{m} \overline{\langle e_1e_k, f_i^*\rangle} \GG_k \textnormal{ for } 1\leq i\leq m+n-1 \textnormal{ and} \\
        \EE_{n+2}\cdot \FF_i &= \sum_{k=3}^{m} \overline{\langle e_2e_k, f_i^*\rangle} \GG_k \textnormal{ for } 1\leq i\leq m+n-1.
    \end{align*}
    \item The products $\EE_i\cdot \FF_{m+n+2}$ for $1\leq i\leq n$ will depend on  $\phi_2(v_{1,2}) = \phi_1(u_1)\cdot\phi_1(u_2)=e_1\cdot e_2$, which is in $\mathfrak{m}F_2$ and is therefore zero in $B_\bullet$.
    \item For the same reasoning as \Cref{lem:up4}, there are no products of this form in $B_\bullet$. \qedhere
\end{enumerate}
\end{proof}

\begin{lemma}\label{lem:over2}
Adopt the notation and hypotheses as in \Cref{setup:LinkClass}.
Consider an ideal $J$, linked to $I$, such that $\rank(\phi_1\otimes k)=2$ and $\rank(\phi_2\otimes k)=1$. Then $B_\bullet$ has guaranteed nontrivial product $\EE_{n+1}\cdot \EE_{n+2} = \FF_{m+n+2}$, as well as possible nontrivial products 
\begin{align*}
    \EE_{n+1}\cdot \EE_i &= \sum_{k=2}^{m+n-1} \overline{\langle e_1f_k, g_i^*\rangle} \FF_k \textnormal{ for } 1\leq i\leq n,\\
    \EE_{n+2}\cdot \EE_i &= \sum_{k=2}^{m+n-1} \overline{\langle e_2f_k, g_i^*\rangle} \FF_k \textnormal{ for } 1\leq i\leq n,\\
    \EE_{n+1}\cdot \FF_i &= \sum_{k=3}^{m} \overline{\langle e_1e_k, f_i^*\rangle} \GG_k \textnormal{ for } 2\leq i\leq m+n-1, \\
    \EE_{n+2}\cdot \FF_i &= \sum_{k=3}^{m} \overline{\langle e_2e_k, f_i^*\rangle} \GG_k \textnormal{ for } 2\leq i\leq m+n-1, \textnormal{ and}\\
    \EE_i\cdot \FF_{m+n+2} &= \sum_{k=3}^{m} \overline{\langle f_1e_k, g_i^*\rangle} \GG_k \textnormal{ for } 1\leq i\leq n.
\end{align*}
\end{lemma}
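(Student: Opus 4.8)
The plan is to follow the template of the proofs of \Cref{lem:up4} and \Cref{lem:up2}: first identify which basis vectors of $\Psi_\bullet$ must be split off to obtain the minimal resolution $\Upsilon_\bullet$, and then work through the product rules a--g on $\Psi_\bullet$, discarding every term that either lands in $\mathfrak{m}\Upsilon_\bullet$ or involves a split-off generator.

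First I would pin down the split-off summands. Since $\rank(\phi_1\otimes k)=2$, \Cref{setup:LinkClass} gives $\phi_1(u_1)=e_1$, $\phi_1(u_2)=e_2$, and $\phi_1(u_3)\in\mathfrak{m}F_1$; exactly as in \Cref{lem:up2} this forces $G_1=e_1^*$ and $G_2=e_2^*$ to split off $\Psi_3$, and $F_{m+n}$, $F_{m+n+1}$ (the classes of $v_{2,3}=\psi_1(e_1^*)$ and $v_{1,3}=\psi_1(e_2^*)$) to split off $\Psi_2$. The new feature is $\rank(\phi_2\otimes k)=1$, i.e.\ $\phi_2(v_{1,2})=f_1$: this makes the $\psi_2$-block of $\partial_2$ carry $f_1^*$ to $u_3$ modulo $\mathfrak{m}$, so one more pair splits off, namely $F_1=\begin{bmatrix}f_1^*\\0\end{bmatrix}$ from $\Psi_2$ and $E_{n+3}=\begin{bmatrix}0\\u_3\end{bmatrix}$ from $\Psi_1$. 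The surviving bases are then $\EE_1,\dots,\EE_{n+2}$ of $B_1$, $\FF_2,\dots,\FF_{m+n-1},\FF_{m+n+2}$ of $B_2$, and $\GG_3,\dots,\GG_m$ of $B_3$, and I would record that their cardinalities $(n+2,\ m+n-1,\ m-2)$ are consistent with the format $(n+2,m-2)$ of $J$ in \ref{LinkOption}(iv).

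Next I would run through the seven product rules. Product a survives only as $\EE_{n+1}\cdot\EE_{n+2}=\FF_{m+n+2}$, the other two Koszul products $u_1u_3$ and $u_2u_3$ involving the split-off vector $E_{n+3}$. Products b and e give the four displayed families $\EE_{n+1}\cdot\EE_i$, $\EE_{n+2}\cdot\EE_i$, $\EE_{n+1}\cdot\FF_i$, $\EE_{n+2}\cdot\FF_i$ after one deletes the summands indexed by the removed generators $G_1,G_2,F_1$ (this is what produces the lower limit $k=3$ and the range $i\geq 2$ in type e and the lower limit $k=2$ in type b), while every term containing $\EE_{n+3}$ vanishes because $\phi_1(u_3)\in\mathfrak{m}F_1$. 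Products c and g vanish verbatim as in \Cref{lem:up2}: $\rank(\phi_1\otimes k)=2<3$ kills all $3\times3$ minors $(\phi_1)_{rst}$, one has $\im\lambda\subseteq\mathfrak{a}_1\mathfrak{a}_2\mathfrak{a}_3K_2\subseteq\mathfrak{m}\Upsilon_\bullet$, and $\im d_1^*\subseteq\mathfrak{m}F_1^*$; product d vanishes because $d_1\in\mathfrak{m}\Psi_3$. The genuinely new contribution is product f: in \Cref{lem:up2} it vanished because $\phi_2(v_{1,2})\in\mathfrak{m}F_2$, but here $\phi_2(v_{1,2})=f_1$ belongs to a minimal set of relations, so $\EE_i\cdot\FF_{m+n+2}=\sum_k\langle f_1e_k,g_i^*\rangle e_k^*$ is nonzero, and deleting the $k=1,2$ terms (since $\GG_1,\GG_2$ are gone) leaves the last family $\EE_i\cdot\FF_{m+n+2}=\sum_{k=3}^m\langle f_1e_k,g_i^*\rangle\GG_k$.

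I expect the main obstacle to be the minimalization bookkeeping: one has to check that the change of basis realizing the three splittings modifies the remaining generators only by elements of $\mathfrak{m}$, so that the products induced on $B_\bullet$ are exactly the truncations written above, with no spurious correction products appearing. As in the earlier lemmas this follows because all the relevant off-diagonal entries of $\partial_2$ and $\partial_3$ lie in $\mathfrak{m}$, and I would cite that argument rather than reproduce it. The one point I would flag explicitly is that $f_1\notin\mathfrak{m}A_2$---which is exactly the normalization imposed in \Cref{setup:LinkClass}---since this is precisely what makes product f survive and distinguishes this case from \Cref{lem:up2}.
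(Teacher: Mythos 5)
Your proposal is correct and follows essentially the same route as the paper: identify the split-off summands ($G_1,G_2$; $F_{m+n},F_{m+n+1}$; and the new pair $F_1$, $E_{n+3}$ coming from $\psi_2(f_1^*)\equiv u_3$), then run through products a--g, with the only substantive novelty being that product f survives because $\phi_2(v_{1,2})=e_1e_2=f_1$ is a basis element rather than lying in $\mathfrak{m}F_2$. The only cosmetic difference is your justification for type d (image of $d_1$ in $\mathfrak{m}$) versus the paper's observation that the relevant basis elements $\FF_{m+n},\FF_{m+n+1},\EE_{n+3}$ are absent; both are valid.
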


\begin{proof}
Since $\rank(\phi_1\otimes k)=2$ and $\rank(\phi_2\otimes k)=1$, we have that $\phi_1(u_1) = e_1$, $\phi_1(u_2) = e_2$, $\phi_1(u_3) \in\mathfrak{m}F_1$, and $e_1e_2=f_1$. To obtain a minimal resolution $C_\bullet$ from $D_\bullet$, we must split off basis elements from $D_3$, $D_2$, and $D_1$. Namely, we split off $G_1$ and $G_2$ from $D_3$, as these correspond with $e_1$ and $e_2$, respectively. Since $\psi_1(e_1^*)=v_{2,3}$ and $\psi_1(e_2^*)=v_{1,3}$, we split off $F_{m+n}$ and $F_{m+n+1}$ from $D_2$, as these correspond to $v_{2,3}$ and $v_{1,3}$, respectively. Likewise, we split off $F_1$ from $D_2$, as this corresponds to $f_1$, and we split off $E_{n+3}$ from $D_1$, since this corresponds to $u_3$ and $\psi_2(f_1^*)=u_3$. Thus we only consider multiplication between the basis elements $\EE_1,\dots,\EE_{n+2}$, $\FF_2,\dots,\FF_{m+n-1},\FF_{m+n+2}$, and $\GG_3,\dots,\GG_m$ in $B_\bullet$. Notice that there are no products of type d due to the absence of $\FF_{m+n}$, $\FF_{m+n+1}$, and $\EE_{n+3}$. Moreover, the absence of $\FF_1$ results in summation index starting at $k=2$ for product b and the absence $\GG_1$ and $\GG_2$ results in summation index starting at $k=3$ for products e and f. We consider each multiplication type individually.
\begin{enumerate}[label=\alph*.]
    \item The product $\EE_{n+1}\cdot \EE_{n+2} = \FF_{m+n+2}$ is in $B_\bullet$. 
    \item We have possible nontrivial products 
    \begin{align*}
        \EE_{n+1}\cdot \EE_i &= \sum_{k=2}^{m+n-1} \overline{\langle e_1f_k, g_i^*\rangle} \FF_k \textnormal{ for } 1\leq i\leq n \textnormal{ and}\\
        \EE_{n+2}\cdot \EE_i &= \sum_{k=2}^{m+n-1} \overline{\langle e_2f_k, g_i^*\rangle} \FF_k \textnormal{ for } 1\leq i\leq n.
    \end{align*}
    \item For the same reasoning as \Cref{lem:up4}, there are no products of this form in $B_\bullet$.
    \setcounter{enumi}{4}
    \item We have possible nontrivial products
    \begin{align*}
        \EE_{n+1}\cdot \FF_i &= \sum_{k=3}^{m} \overline{\langle e_1e_k, f_i^*\rangle} \GG_k \textnormal{ for } 2\leq i\leq m+n-1 \textnormal{ and} \\
        \EE_{n+2}\cdot \FF_i &= \sum_{k=3}^{m} \overline{\langle e_2e_k, f_i^*\rangle} \GG_k \textnormal{ for } 2\leq i\leq m+n-1.
    \end{align*}
    \item The products $\EE_i\cdot \FF_{m+n+2}$ for $1\leq i\leq n$ will depend on  $\phi_2(v_{1,2}) = \phi_1(u_1)\cdot\phi_1(u_2) = e_1e_2 = f_1$, so we have possible nontrivial products 
    \[
        \EE_i\cdot \FF_{m+n+2} = \sum_{k=3}^{m} \overline{\langle f_1e_k, g_i^*\rangle} \GG_k \textnormal{ for } 1\leq i\leq n
    \]
    \item For the same reasoning as \Cref{lem:up4}, there are no products of this form in $B_\bullet$. \qedhere
\end{enumerate}
\end{proof}

\begin{lemma}\label{lem:over3}
Adopt the notation and hypotheses as in \Cref{setup:LinkClass}.
Consider an ideal $J$, linked to $I$, such that $\rank(\phi_1\otimes k)=3$ and $\rank(\phi_2\otimes k)=0$. Then the only possible nontrivial products in $B_\bullet$ are
\begin{align*}\label{prod:over3}
    \EE_{n+1}\cdot \EE_i &= \sum_{k=1}^{m+n-1} \overline{\langle e_1f_k, g_i^*\rangle} \FF_k \textnormal{ for } 1\leq i\leq n,\\
    \EE_{n+2}\cdot \EE_i &= \sum_{k=1}^{m+n-1} \overline{\langle e_2f_k, g_i^*\rangle} \FF_k \textnormal{ for } 1\leq i\leq n,\\
    \EE_{n+3}\cdot \EE_i &= \sum_{k=1}^{m+n-1} \overline{\langle e_3f_k, g_i^*\rangle} \FF_k \textnormal{ for } 1\leq i\leq n,\\
    \EE_i\cdot \EE_j &= \overline{X(e_1\wedge e_2\wedge e_3\otimes g_i^*\wedge g_j^*)} \textnormal{ for } 1\leq i,j\leq n,\\
    \EE_{n+1}\cdot \FF_i &= \sum_{k=4}^{m} \overline{\langle e_1e_k, f_i^*\rangle} \GG_k \textnormal{ for } 1\leq i\leq m+n-1,\\
    \EE_{n+2}\cdot \FF_i &= \sum_{k=4}^{m} \overline{\langle e_2e_k, f_i^*\rangle} \GG_k \textnormal{ for } 1\leq i\leq m+n-1,\\
    \EE_{n+3}\cdot \FF_i &= \sum_{k=4}^{m} \overline{\langle e_3e_k, f_i^*\rangle} \GG_k \textnormal{ for } 1\leq i\leq m+n-1,\textnormal{ and}\\
    \EE_i\cdot \FF_j &= \overline{Y(e_1\wedge e_2\wedge e_3\otimes g_i^*\wedge f_j^*)} \textnormal{ for } 1\leq i\leq n,\enspace 1\leq j\leq m+n-1.
\end{align*}
\end{lemma}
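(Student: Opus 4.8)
The plan is to follow the template of Lemmas~\ref{lem:up4}, \ref{lem:up2}, and \ref{lem:over2}: first determine which summands of $\Psi_\bullet$ split off when one passes to the minimal resolution $\Upsilon_\bullet$, then walk through the product types (a)--(g) on $\Psi_\bullet$ and record what survives in $B_\bullet$.

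I would begin by extracting the consequences of the hypotheses. Since $I$ is not of class $\mathbf{C}(3)$, we have $\rank(\phi_3\otimes\Bbbk)=0$ (see \ref{LinkOption}), and by \Cref{setup:LinkClass} the assumption $\rank(\phi_1\otimes\Bbbk)=3$ lets us take $\phi_1(u_1)=e_1$, $\phi_1(u_2)=e_2$, and $\phi_1(u_3)=e_3$. In particular every entry of the matrix of $\phi_1$ outside its first three rows is zero, so among the minors $(\phi_1)_{rst}$ the only unit is $(\phi_1)_{123}=1$ and every other such minor vanishes identically. It follows that $\psi_1(e_1^*)=v_{2,3}$, $\psi_1(e_2^*)=v_{1,3}$, and $\psi_1(e_3^*)=v_{1,2}$ (while $\psi_1(e_i^*)=0$ for $i\ge 4$), so the summands $G_1,G_2,G_3$ of $\Psi_3$ split off against $F_{m+n},F_{m+n+1},F_{m+n+2}$ of $\Psi_2$; since $\rank(\phi_2\otimes\Bbbk)=\rank(\phi_3\otimes\Bbbk)=0$, nothing further is forced to split off. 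Hence $\Upsilon_\bullet$ has bases $\{\EE_i\}_{1\le i\le n+3}$, $\{\FF_i\}_{1\le i\le m+n-1}$, $\{\GG_i\}_{4\le i\le m}$, so $J$ has format $(n+3,m-3)$, and the absence of $\GG_1,\GG_2,\GG_3$ is what shifts the lower limit of the summation in product~(e) to $k=4$.

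Next I would dispatch the product types one by one. Products of type~(a) output some $F_{m+n+k}$, and products of types~(d) and~(f) each have one of $F_{m+n},F_{m+n+1},F_{m+n+2}$ among their factors; since these have split off, none of (a), (d), or (f) occur in $B_\bullet$. Product~(b) gives $\EE_{n+i}\cdot\EE_j=\sum_{k=1}^{m+n-1}\langle e_if_k,g_j^*\rangle\FF_k$ for $i\in\{1,2,3\}$, and product~(e) gives $\EE_{n+i}\cdot\FF_j=\sum_{k=4}^{m}\langle e_ie_k,f_j^*\rangle\GG_k$ for $i\in\{1,2,3\}$. In product~(c) the $F_2^*$-component of $\EE_i\cdot\EE_j$ collapses, by the minor computation above, to the single surviving term $X(e_1\wedge e_2\wedge e_3\otimes g_i^*\wedge g_j^*)$, while the $K_2$-component $\lambda(g_i^*\wedge g_j^*)$ lies in $\langle F_{m+n},F_{m+n+1},F_{m+n+2}\rangle$ and so dies in $B_\bullet$; likewise in product~(g) the minor sum collapses to $Y(e_1\wedge e_2\wedge e_3\otimes g_i^*\wedge f_j^*)$ while $d_1^*(\mu(g_i^*\otimes f_j^*))\in\mathfrak{m}F_1^*$ dies. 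This yields precisely the eight families in the statement.

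The one genuinely new feature compared with Lemmas~\ref{lem:up4}--\ref{lem:over2} is the treatment of the $X$- and $Y$-terms in products (c) and (g): in those lemmas every $3\times3$ minor of $\phi_1$ lies in $\mathfrak{m}$ and these terms simply drop out, whereas here $(\phi_1)_{123}$ is a unit, so one must check that it is the \emph{only} surviving minor---this is exactly where the exact equalities $\phi_1(u_i)=e_i$, hence the vanishing of all rows of $\phi_1$ past the third, come in---and that the accompanying correction pieces contribute nothing: $\im\lambda\subseteq K_2$, which is precisely the block of $\Psi_2$ that splits off, and $d_1^*\mu$ lands in $\mathfrak{m}F_1^*$. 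As in the earlier proofs, it is legitimate to read products directly off the DG-algebra structure on $\Psi_\bullet$ and reduce on $\Upsilon_\bullet\otimes\Bbbk$ without any homotopy-transfer corrections, because each split-off pair cancels along a unit and the complementary differentials already take values in $\mathfrak{m}$.
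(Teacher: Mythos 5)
Your proposal is correct and follows essentially the same route as the paper: identify that $G_1,G_2,G_3$ split against $F_{m+n},F_{m+n+1},F_{m+n+2}$, then run through product types (a)--(g), with the only new ingredient being that $(\phi_1)_{123}$ is the unique unit minor so the $X$- and $Y$-terms survive while the correction terms die. The one small divergence is cosmetic: for the $\lambda$-component of product (c) the paper invokes $\im\lambda\subseteq\mathfrak{a}_1\mathfrak{a}_2\mathfrak{a}_3K_2\subseteq\mathfrak{m}\Upsilon_\bullet$, whereas you observe that it lands in the split-off block of $\Psi_2$ (whose classes are boundaries); both arguments are valid and yield the same conclusion.
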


\begin{proof}
Since $\rank(\phi_1\otimes k)=3$, we have that $\phi_1(u_1) = e_1$, $\phi_1(u_2) = e_2$, and $\phi_1(u_3) = e_3$. To obtain a minimal resolution $C_\bullet$ from $D_\bullet$, we must split off basis elements from $D_3$ and $D_2$. Namely, we split off $G_1$, $G_2$, and $G_3$ from $D_3$, as these correspond with $e_1$, $e_2$, and $e_3$, respectively. Since $\psi_1(e_1^*)=v_{2,3}$, $\psi_1(e_2^*)=v_{1,3}$, and $\psi_1(e_3^*)=v_{1,2}$, we split off $F_{m+n}$, $F_{m+n+1}$, and $F_{m+n+2}$ from $D_2$, as these correspond with $v_{2,3}$, $v_{1,3}$, and $v_{1,2}$, respectively. Thus we only consider multiplication between the basis elements $\EE_1,\dots,\EE_{n+3}$, $\FF_1,\dots,\FF_{m+n-1}$, and $\GG_4,\dots,\GG_n$ in $B_\bullet$. Notice that there are no products of type a, d, or f due to the absence of $\FF_{m+n}$, $\FF_{m+n+1}$, and $\FF_{m+n+2}$. Moreover, the absence $\GG_1$, $\GG_2$, and $\GG_3$ results in summation index starting at $k=4$ for product e. We consider each multiplication type individually.
\begin{enumerate}[label=\alph*.]
    \setcounter{enumi}{1}
    \item We have possible nontrivial products 
    \begin{align*}
        \EE_{n+1}\cdot \EE_i &= \sum_{k=1}^{m+n-1} \overline{\langle e_1f_k, g_i^*\rangle} \FF_k \textnormal{ for } 1\leq i\leq n,\\
        \EE_{n+2}\cdot \EE_i &= \sum_{k=1}^{m+n-1} \overline{\langle e_2f_k, g_i^*\rangle} \FF_k \textnormal{ for } 1\leq i\leq n, \textnormal{ and}\\
        \EE_{n+3}\cdot \EE_i &= \sum_{k=1}^{m+n-1} \overline{\langle e_3f_k, g_i^*\rangle} \FF_k \textnormal{ for } 1\leq i\leq n.
    \end{align*}
    \item Consider the matrix representation of $\phi_1$ as
    \[ \phi_1 = \begin{bmatrix}
        1 & 0 & 0\\
        0 & 1 & 0\\
        0 & 0 & 1\\
        0 & 0 & 0\\
        \vdots & \vdots & \vdots \\
        0 & 0 & 0\\
    \end{bmatrix} \]
    where $a_{i,1}, a_{i,2}, a_{i,3}\in\mathfrak{m}$ for $3\leq i\leq m$. Therefore $(\phi_1)_{rst}\notin\mathfrak{m}$ precisely when $r=1$, $s=2$, and $t=3$, in which case $(\phi_1)_{rst}=1$. Additionally, since we can take $\im\lambda \subseteq \mathfrak{a}_1\mathfrak{a}_2\mathfrak{a}_3K_2$, where $\mathfrak{a}_j$ is the ideal generated by the entries of the $j^{th}$ column of the matrix representation of $\phi_1$, we have $\im\lambda\subseteq\mathfrak{m}C_\bullet$. Therefore the possible nontrivial products in $B_\bullet$ are
    \begin{align*}
        \EE_i\cdot \EE_j &= \overline{X(e_1\wedge e_2\wedge e_3\otimes g_i^*\wedge g_j^*)} \textnormal{ for } 1\leq i,j\leq n
    \end{align*}
    \setcounter{enumi}{4}
    \item We have possible nontrivial products
    \begin{align*}
        \EE_{n+1}\cdot \FF_i &= \sum_{k=4}^{m} \overline{\langle e_1e_k, f_i^*\rangle} \GG_k \textnormal{ for } 1\leq i\leq m+n-1, \\
        \EE_{n+2}\cdot \FF_i &= \sum_{k=4}^{m} \overline{\langle e_2e_k, f_i^*\rangle} \GG_k \textnormal{ for } 1\leq i\leq m+n-1, \textnormal{ and}\\
        \EE_{n+3}\cdot \FF_i &= \sum_{k=4}^{m} \overline{\langle e_3e_k, f_i^*\rangle} \GG_k \textnormal{ for } 1\leq i\leq m+n-1.
    \end{align*}
    \setcounter{enumi}{6}
    \item By the same argument in the discussion of product c above, $(\phi_1)_{rst}\notin\mathfrak{m}$ precisely when $r=1$, $s=2$, and $t=3$, in which case $(\phi_1)_{rst}=1$. Since $d_1^*(\mu(g^*\otimes f^*))\in\mathfrak{m}F_1^*$, the possible nontrivial products in $B_\bullet$ are
    \begin{align*}
        \EE_i\cdot \FF_j &= \overline{Y(e_1\wedge e_2\wedge e_3\otimes g_i^*\wedge f_j^*)} \textnormal{ for } 1\leq i\leq n, \enspace 1\leq j\leq m+n-1. \qedhere
    \end{align*}
\end{enumerate}
\end{proof}

\begin{chunk}\textbf{Proof of \Cref{linktoT}.}
According to \Cref{setup:LinkClass}, choose the regular sequence $x_1,x_2,x_3$ such that $\rank(\phi_1\otimes k)=0$. By the proof of \Cref{LinkOption}, this implies $\rank(\phi_2\otimes k)=0$. Therefore, $\phi_1(u_1), \phi_1(u_2), \phi_1(u_3) \in\mathfrak{m}F_1$. This implies products b, c, e, and g are in $\mathfrak{m}C_\bullet$. Moreover, $\phi_2(v_{1,2}), \phi_2(v_{1,3}), \phi_2(v_{2,3}) \in\mathfrak{m}F_2$, so product f is in $\mathfrak{m}C_\bullet$. Finally, $d_1$ takes image in $\mathfrak{m}$, so product d will be trivial in $C_\bullet\setminus{\mathfrak{m}C_\bullet}$. Therefore we only have nontrivial products coming from product a, which are $E_{n+1}\cdot E_{n+2} = F_{m+n+2}$, $E_{n+1}\cdot E_{n+3} = F_{m+n+1}$, and $E_{n+2}\cdot E_{n+3} = F_{m+n+2}$. Each of these products descends to $B_\bullet$, so $J$ is of class $\mathbf{T}$.
\end{chunk}

\begin{chunk}\textbf{Proof of \Cref{linkT}.}
Let $I$ be of class $\mathbf{T}$ and format $(m,n)$. First, choose a basis for $F_\bullet$ such that the products in $F_\bullet$ that induce nontrivial products in $A_\bullet$ are $e_1e_2=f_1$, $e_1e_3=f_2$, and $e_2e_3=f_3$. We prove (ii) and (iv).
\begin{enumerate}[label=(\roman*)]
    \setcounter{enumi}{1}
    \item According to \Cref{setup:LinkClass}, choose the regular sequence $x_1,x_2,x_3$ such that $\rank(\phi_1\otimes \Bbbk)=1$. By \Cref{lem:up4}, the only nontrivial products in $B_\bullet$ are  $\EE_{n+1}\cdot \EE_{n+2} = \FF_{m+n+2}$ and $\EE_{n+1}\cdot \EE_{n+3} = \FF_{m+n+1}$, as well as
    \begin{align*}
        \EE_{n+1}\cdot \FF_1 &= \sum_{k=2}^{m+n-1} \langle e_1e_k,f_1^*\rangle \GG_k = \langle e_1e_2, f_1^*\rangle \GG_2
         = \GG_2 \textnormal{ and}\\
        \EE_{n+1}\cdot \FF_2 &= \sum_{k=2}^{m+n-1} \langle e_1e_k,f_2^*\rangle \GG_k = \langle e_1e_3, f_2^*\rangle \GG_3
         = \GG_3,
    \end{align*}
    making the linked ideal class $\mathbf{H}(2,2)$.
    \setcounter{enumi}{3}
    \item According to \Cref{setup:LinkClass}, choose the regular sequence $x_1,x_2,x_3$ such that  $\rank(\phi_1\otimes \Bbbk)=2$. By choice of the products above, $\rank(\phi_2\otimes \Bbbk)=1$, so we must apply \Cref{lem:over2}. The only nontrivial products in  $B_\bullet$ are  $\EE_{n+1}\cdot \EE_{n+2} = \FF_{m+n+2}$, as well as
    \begin{align*}
        \EE_{n+1}\cdot \FF_2 &= \sum_{k=3}^{m+n-1} \langle e_1e_k,f_2^*\rangle \GG_k = \langle e_1e_3, f_2^*\rangle \GG_3
         = \GG_3 \textnormal{ and}\\
        \EE_{n+2}\cdot \FF_3 &= \sum_{k=3}^{m+n-1} \langle e_2e_k,f_3^*\rangle \GG_k = \langle e_2e_3, f_3^*\rangle \GG_3
         = \GG_3,
    \end{align*}
    making the linked ideal class $\mathbf{B}$.
\end{enumerate}

\noindent Now, choose a basis for $F_\bullet$ such that the products in $F_\bullet$ that induce nontrivial products in $A_\bullet$ are $e_2e_3=f_1$, $e_2e_4=f_2$, and $e_3e_4=f_3$. We prove (i) and (iii).
\begin{enumerate}[label=(\roman*)]
    \item According to \Cref{setup:LinkClass}, choose the regular sequence $x_1,x_2,x_3$ such that $\rank(\phi_1\otimes \Bbbk)=1$. By \Cref{lem:up4}, the only nontrivial products in $B_\bullet$ are $\EE_{n+1}\cdot \EE_{n+2} = \FF_{m+n+2}$ and $\EE_{n+1}\cdot \EE_{n+3} = \FF_{m+n+1}$, making the linked ideal class $\mathbf{H}(2,0)$.
    \setcounter{enumi}{2}
    \item According to \Cref{setup:LinkClass}, choose the regular sequence $x_1,x_2,x_3$ such that $\rank(\phi_1\otimes \Bbbk)=2$. By choice of the products above, $\rank(\phi_2\otimes \Bbbk)=0$, so we must apply \Cref{lem:up2}. The only nontrivial product in $B_\bullet$ is $\EE_{n+1}\cdot \EE_{n+2} = \FF_{m+n+2}$, as well as 
    \begin{align*}
        \EE_{n+2}\cdot \FF_1 &= \sum_{k=3}^{m+n-1} \langle e_2e_k,f_1^*\rangle \GG_k = \langle e_2e_3, f_1^*\rangle \GG_3
         = \GG_3 \textnormal{ and}\\
        \EE_{n+2}\cdot \FF_2 &= \sum_{k=3}^{m+n-1} \langle e_2e_k,f_2^*\rangle \GG_k = \langle e_2e_4, f_2^*\rangle \GG_4
         = \GG_4, 
    \end{align*}
    making the linked ideal class $\mathbf{H}(1,2)$. \qed
\end{enumerate}
\end{chunk}

\begin{chunk}\textbf{Proof of \Cref{linkG}.}
Let $I$ be of class $\mathbf{G}(r)$ and format $(m,n)$. Choose a basis for $F_\bullet$ such that the products in $F_\bullet$ that induce nontrivial products in $A_\bullet$ are $e_{i}f_{i}=g_1$ for $1\leq i\leq r$. 
\begin{enumerate}[label=(\roman*)]
    \item According to \Cref{setup:LinkClass}, choose the regular sequence $x_1,x_2,x_3$ such that $\rank(\phi_1\otimes \Bbbk)=1$. By \Cref{lem:up4}, the only nontrivial products in $B_\bullet$ are $\EE_{n+1}\cdot \EE_{n+2} = \FF_{m+n+2}$ and $\EE_{n+1}\cdot \EE_{n+3} = \FF_{m+n+1}$, as well as
    \begin{equation*}
        \EE_{n+1}\cdot \EE_1 = \sum_{k=1}^{m+n-1} \overline{\langle e_1f_k,g_1^*\rangle} \FF_k = \overline{\langle e_1f_1, g_1^*\rangle} \FF_1
         = \FF_1,
    \end{equation*}
    making the linked ideal class $\mathbf{H}(3,0)$.
\end{enumerate}

\begin{enumerate}[label=(\roman*)]
    \setcounter{enumi}{1}
    \item According to \Cref{setup:LinkClass}, choose the regular sequence $x_1,x_2,x_3$ such that $\rank(\phi_1\otimes \Bbbk)=2$. By choice of the products above, $\rank(\phi_2\otimes \Bbbk)=0$, so we must apply \Cref{lem:up2}. The only nontrivial product in $B_\bullet$ is $\EE_{n+1}\cdot \EE_{n+2} = \FF_{m+n+2}$, as well as
    \begin{align*}
        \EE_{n+1}\cdot \EE_1 &= \sum_{k=1}^{m+n-1} \overline{\langle e_1f_k,g_1^*\rangle} \FF_k = \overline{\langle e_1f_1, g_1^*\rangle} \FF_1
         = \FF_1,\textnormal{ and}\\
        \EE_{n+2}\cdot \EE_1 &= \sum_{k=1}^{m+n-1} \overline{\langle e_2f_k,g_1^*\rangle} \FF_k = \overline{\langle e_2f_2, g_1^*\rangle} \FF_2
         = \FF_2,
    \end{align*}
    making the linked ideal class $\mathbf{T}$. \qed
\end{enumerate}
\end{chunk}

\begin{chunk}\textbf{Proof of \Cref{linkH}.}
Let $I$ be of class $\mathbf{H}(p,q)$ and format $(m,n)$. First, choose a basis for $F_\bullet$ such that the products in $F_\bullet$ that induce nontrivial products in $A_\bullet$ are $e_1e_{i+1}=f_i$ for $1\leq i\leq p$ and $e_1f_{i+p}=g_i$ for $1\leq i\leq q$. 
We prove (ii).
\begin{enumerate}[label=(\roman*)]
    \setcounter{enumi}{1}
    \item According to \Cref{setup:LinkClass}, choose the regular sequence $x_1,x_2,x_3$ such that $\rank(\phi_1\otimes \Bbbk)=1$. By \Cref{lem:up4}, the only nontrivial products in $B_\bullet$ are $\EE_{n+1}\cdot \EE_{n+2} = \FF_{m+n+2}$ and $\EE_{n+1}\cdot \EE_{n+3} = \FF_{m+n+1}$, as well as
    \begin{align*}
        \EE_{n+1}\cdot \EE_i &= \sum_{k=1}^{m+n-1} \overline{\langle e_1f_k,g_i^*\rangle} \FF_k = \overline{\langle e_1f_{i+p}, g_i^*\rangle} \FF_{i+p} = \FF_{i+p} \textnormal{ for } 1\leq i\leq q \textnormal{ and}\\
        \EE_{n+1}\cdot \FF_i &= \sum_{k=2}^{m} \overline{\langle e_1e_k,f_i^*\rangle} \GG_k = \overline{\langle e_1e_{i+1}, f_i^*\rangle} \GG_{i+1} = \GG_{i+1} \textnormal{ for } 1\leq i\leq p,
    \end{align*}
    making the linked ideal class $\mathbf{H}(q+2,p)$.
\end{enumerate}

\noindent Now, choose a basis for $F_\bullet$ such that the products in $F_\bullet$ that induce nontrivial products in $A_\bullet$ are $e_1e_{i+2}=f_i$ for $1\leq i\leq p$ and $e_1f_{i+p}=g_i$ for $1\leq i\leq q$. Since $0\leq p\leq m-2$, we can choose the products above such that $e_2$ is not involved in the product structure. We prove (iv).
\begin{enumerate}[label=(\roman*)]
    \setcounter{enumi}{3}
    \item According to \Cref{setup:LinkClass}, choose the regular sequence $x_1,x_2,x_3$ such that  $\rank(\phi_1\otimes \Bbbk)=2$. By choice of the products above, $\rank(\phi_2\otimes \Bbbk)=0$, so we must apply \Cref{lem:up2}. The only nontrivial product in $B_\bullet$ are $\EE_{n+1}\cdot \EE_{n+2} = \FF_{m+n+2}$, as well as
    \begin{align*}
        \EE_{n+1}\cdot \EE_i &= \sum_{k=1}^{m+n-1} \overline{\langle e_1f_k,g_i^*\rangle} \FF_k = \overline{\langle e_1f_{i+p}, g_i^*\rangle} \FF_{i+p} = \FF_{i+p} \textnormal{ for } 1\leq i\leq q \textnormal{ and}\\
        \EE_{n+1}\cdot \FF_i &= \sum_{k=3}^{m} \overline{\langle e_1e_k,f_i^*\rangle} \GG_k = \overline{\langle e_1e_{i+2}, f_i^*\rangle} \GG_{i+2} = \GG_{i+2} \textnormal{ for } 1\leq i\leq p,
    \end{align*}
    making the linked ideal class $\mathbf{H}(q+1,p)$.
\end{enumerate}

\noindent Now, choose a basis for $F_\bullet$ such that the products in $F_\bullet$ that induce nontrivial products in $A_\bullet$ are $e_1e_{i+3}=f_i$ for $1\leq i\leq p$ and $e_1f_{i+p}=g_i$ for $1\leq i\leq q$. Since $2\leq p\leq m-3$, we can choose the products above such that $e_2$ and $e_3$ are not involved in the product structure. We prove (v).
\begin{enumerate}[label=(\roman*)]
    \setcounter{enumi}{4}
    \item According to \Cref{setup:LinkClass}, choose the regular sequence $x_1,x_2,x_3$ such that  $\rank(\phi_1\otimes k)=3$. By choice of the products above, $\rank(\phi_2\otimes k)=0$. By \cite[Proposition 3.2(b)]{CVW20Linkage}, $J$ is of class $\mathbf{H}(0,q)$, where $q\geq p$. Therefore $B_\bullet$ has no products of type a--c and by \Cref{lem:over3}, $B_\bullet$ has the nontrivial products
    \begin{align*}
        \EE_{n+1}\cdot \FF_i &= \sum_{k=3}^{m} \overline{\langle e_1e_k,f_i^*\rangle} \GG_k = \overline{\langle e_1e_{i+3}, f_i^*\rangle} \GG_{i+3} = \GG_{i+3} \textnormal{ for } 1\leq i\leq p,
    \end{align*}
    as well as the possibly nontrivial products
    \begin{align*}
        \EE_i\cdot \FF_j &= \overline{Y(e_1\wedge e_2\wedge e_3\otimes g_i^*\wedge f_j^*)} \textnormal{ for } 1\leq i\leq n, 1\leq j\leq m+n-1.
    \end{align*}
    Recall that $q=\dim A_1A_2$. To show that $q=p$, we will show that the products $\EE_i\FF_j$ are in the subspace of $A_3$ spanned by $\GG_4,\GG_5,\dots,\GG_{p+3}$. To do so, we consider the subspace $U$ generated by $\EE_{n+1}$ and $\EE_i$ for some fixed $1\leq i\leq n$. Since $J$ is of class $\mathbf{H}(0,q)$ with $q\geq p\geq 2$, it follows that $U$ must have multiplicative structure according to \cite[Lemma 2.5(a)]{AKM88}. In this case, we may choose a basis for $B_\bullet$ in which only one basis element of $U$ contributes to the product structure. The basis elements of $U$ are of the form $\mathcal{E}_1=a\EE_{n+1}+b\EE_i$ and $\mathcal{E}_2=c\EE_{n+1}+d\EE_i$. Without loss of generality, assume that $\mathcal{E}_1$ contributes to the product structure and $\mathcal{E}_2$ does not. This implies $d\neq 0$; otherwise, $\mathcal{E}_2=cE_{n+1}$ would contribute to the product structure. Since $\mathcal{E}_2$ does not contribute to the product structure, we have 
    \begin{align*}
        0 &= \mathcal{E}_2 \FF_1 = (c\EE_{n+1}+d\EE_i)\FF_1 = c \EE_{n+1}\FF_1 + d \EE_i\FF_1 = c\GG_4 + d\EE_i\FF_1,\\
        0 &= \mathcal{E}_2 \FF_2 = (c\EE_{n+1}+d\EE_i)\FF_2 = c \EE_{n+1}\FF_2 + d \EE_i\FF_2 = c\GG_5 + d\EE_i\FF_2, \\
        \vdots\\
        0 &= \mathcal{E}_2 \FF_p = (c\EE_{n+1}+d\EE_i)\FF_p = c \EE_{n+1}\FF_p + d \EE_i\FF_p = c\GG_{p+3} + d\EE_i\FF_p, \\
        0 &= \mathcal{E}_2 \FF_{p+1} = (c\EE_{n+1}+d\EE_i)\FF_{p+1} = c \EE_{n+1}\FF_{p+1} + d \EE_i\FF_{p+1} = d\EE_i\FF_{p+1},\\
        \vdots\\
        0 &= \mathcal{E}_2 \FF_{m+n-1} = (c\EE_{n+1}+d\EE_i)\FF_{m+n-1} = c \EE_{n+1}\FF_{m+n-1} + d \EE_i\FF_{m+n-1} = d\EE_i\FF_{m+n-1}.
    \end{align*}
    This implies that $\EE_i\FF_j=-cd^{-1}\GG_{j+3}$ for $1\leq j\leq p$ and $\EE_i\FF_j=0$ for $p+1\leq j\leq m+n-1$. Therefore, the products involving $\EE_i$ are contained in the subspace spanned by $\GG_4,\GG_5,\dots,\GG_{p+3}$ and $q$ does not increase a result of these products. Since $i$ is arbitrary, this argument holds for all $1\leq i\leq n$ and we can conclude that $q=p$ and $J$ is of class $\mathbf{H}(0,p)$.
\end{enumerate}

\noindent Now, choose a basis for $F_\bullet$ such that the products in $F_\bullet$ that induce nontrivial products in $A_\bullet$ are $e_2e_1=f_1$, $e_2e_{i+1}=f_{i}$ for $2\leq i\leq p$, and $e_2f_{i+p}=g_i$ for $1\leq i\leq q$. We assume $p\geq 1$ and prove (i).
\begin{enumerate}[label=(\roman*)]
    \item According to \Cref{setup:LinkClass}, choose the regular sequence $x_1,x_2,x_3$ such that  $\rank(\phi_1\otimes\Bbbk)=1$. By \Cref{lem:up4}, the only nontrivial products in $B_\bullet$ are $\EE_{n+1}\cdot \EE_{n+2} = \FF_{m+n+2}$ and $\EE_{n+1}\cdot \EE_{n+3} = \FF_{m+n+1}$, as well as
    \begin{align*}
        \EE_{n+1}\cdot \FF_1 &= \sum_{k=2}^{m} \overline{\langle e_1e_k,f_1^*\rangle} \GG_k = \overline{\langle e_1e_2, f_1^*\rangle} \GG_2 = -\GG_2, 
    \end{align*}
    making the linked ideal class $\mathbf{H}(2,1)$.
\end{enumerate}

\noindent Finally, choose a basis for $F_\bullet$ such that the products in $F_\bullet$ that induce nontrivial products in $A_\bullet$ are $e_3e_1=f_1$, $e_3e_{i+2}=f_{i}$ for $2\leq i\leq p$, and $e_3f_{i+p}=g_i$ for $1\leq i\leq q$. Since $1\leq p\leq m-2$, we can choose the products above such that $e_2$ is not involved in the product structure. We prove (iii).
\begin{enumerate}[label=(\roman*)]
    \setcounter{enumi}{2}
    \item According to \Cref{setup:LinkClass}, choose the regular sequence $x_1,x_2,x_3$ such that  $\rank(\phi_1\otimes k)=2$. By choice of the products above, $\rank(\phi_2\otimes k)=0$, so we must apply \Cref{lem:up2}. The only nontrivial product in $B_\bullet$ is $\EE_{n+1}\cdot \EE_{n+2} = \FF_{m+n+2}$, as well as
    \begin{align*}
        \EE_{n+1}\cdot \FF_1 &= \sum_{k=3}^{m} \overline{\langle e_1e_k,f_1^*\rangle} \GG_k = \overline{\langle e_1e_3, f_1^*\rangle} \GG_3 = -\GG_3,
    \end{align*}
    making the linked ideal class $\mathbf{H}(1,1)$. \qed
\end{enumerate}
\end{chunk}

\bibliographystyle{amsplain}
\bibliography{biblio}
\end{document}